\DeclareMathOperator{\Hom}{Hom}
\DeclareMathOperator{\Ext}{Ext}
\DeclareMathOperator{\Tor}{Tor}
\DeclareMathOperator{\Ann}{Ann}
\DeclareMathOperator{\hgt}{ht}
\DeclareMathOperator{\gr}{gr}
\theoremstyle{plain}
\newtheorem{thm}{Theorem}[section]
\newtheorem{bigthm}{Theorem}
\newtheorem{prop}[thm]{Proposition}
\newtheorem{lem}[thm]{Lemma}
\newtheorem{cor}[thm]{Corollary}
\theoremstyle{definition}
\theoremstyle{remark}
\newtheorem{remark}[thm]{Remark}
\newcommand{\mm}{\mathfrak{m}}
\newcommand{\D}{\mathscr{D}}
\newcommand{\cD}{\widehat{\mathscr{D}}}
\newcommand{\cR}{\widehat{R}}
\newcommand{\cM}{\widehat{M}}
\newcommand{\cN}{\widehat{N}}
\newcommand{\cmm}{\widehat{\mathfrak{m}}}
\newcommand{\dR}{\mathrm{dR}}
\newcommand{\wh}{\widehat}
\begin{document}

\title{ON completion of graded $\D$-modules}
\author{Nicholas Switala \and Wenliang Zhang}
\address{Department of Mathematics, Statistics, and Computer Science \\ University of Illinois at Chicago \\ 322 SEO (M/C 249) \\ 851 S. Morgan Street \\ Chicago, IL 60607}
\email{nswitala@uic.edu, wlzhang@uic.edu}
\thanks{The first author gratefully acknowledges NSF support through grant DMS-1604503. The second author is partially supported by the NSF through DMS-1606414 and CAREER grant DMS-1752081.}
\subjclass[2010]{Primary 13N10, 14F10, 14F40}
\keywords{$\D$-modules, de Rham cohomology}

\begin{abstract}
Let $R = k[x_1, \ldots, x_n]$ be a polynomial ring over a field $k$ of characteristic zero and $\cR$ be the formal power series ring $k[[x_1, \ldots, x_n]]$. If $M$ is a $\D$-module over $R$, then $\cR \otimes_R M$ is naturally a $\D$-module over $\cR$. Hartshorne and Polini asked whether the natural maps $H^i_{\dR}(M)\to H^i_{\dR}(\cR \otimes_R M)$ (induced by $M\to \cR \otimes_R M$) are isomorphisms whenever $M$ is graded and holonomic. We give a positive answer to their question, as a corollary of the following stronger result. Let $M$ be a finitely generated graded $\D$-module: for each integer $i$ such that $\dim_kH^i_{\dR}(M)<\infty$, the natural map $H^i_{\dR}(M)\to H^i_{\dR}(\cR \otimes_R M)$ (induced by $M\to \cR \otimes_R M$) is an isomorphism.
\end{abstract}

\maketitle

\section{Introduction}\label{introduction}

Let $k$ be a field of characteristic zero. Let $R$ be the polynomial ring in $n$ variables over $k$, and let $\cR$ be the formal power series ring in $n$ variables over $k$. Consider the rings $\D = \D(R,k)$ (resp. $\cD = \D(\cR, k)$) of $k$-linear differential operators on $R$ (resp. $\cR$). In this paper we investigate the behaviors of de Rham cohomology of graded $\D$-modules under completion, which is motivated by a question posed in \cite[p. 18]{HartshornePolini} by Hartshorne and Polini. In \cite{HartshornePolini}, Hartshorne and Polini investigate the $\D$-module structure of local cohomology modules of $R$ supported in homogeneous ideals: in particular, their de Rham cohomology spaces. They use some technical results on $\cD$-modules, due to van den Essen, that have no analogues over the polynomial ring. Since their motivation lies with the polynomial ring, they investigate the preservation of de Rham cohomology under the operation of completion.

If $M$ is a left $\D$-module, there is a natural left $\cD$-module structure on the $\cR$-module $\cR \otimes_R M$ (see section \ref{completion} below). We can therefore compare the de Rham cohomology of $M$ with that of $\cR \otimes_R M$. In fact, there are natural maps
\[
H^i_{\dR}(M) \rightarrow H^i_{\dR}(\cR \otimes_R M)
\]
of $k$-spaces, induced by the obvious natural maps $M\to \cR\otimes_R M$, for all $i \geq 0$. Hartshorne and Polini prove that these maps are isomorphisms in the case when $M=H^j_I(R)$ with $I$ a homogeneous ideal of $R$, but need not be isomorphisms in general (even for \emph{holonomic} $M$):

\begin{thm}[Hartshorne-Polini]
\label{HP results on completion}
Let $R$, $\cR$, $\D$, and $\cD$ be as above.
\begin{enumerate}[(a)]
\item \cite[Theorem 6.2]{HartshornePolini} If $I \subseteq R$ is a homogeneous ideal, then for all $i,j \geq 0$, the completion map
\[
H^i_{\dR}(H^j_I(R)) \rightarrow H^i_{\dR}(\cR \otimes_R H^j_I(R))
\]
is an isomorphism of $k$-spaces.
\item \cite[Example 6.1]{HartshornePolini} Let $n=1$. There exists a holonomic left $\D$-module $M$ such that the completion map
\[
H^i_{\dR}(M) \rightarrow H^i_{\dR}(\cR \otimes_R M)
\]
is not an isomorphism of $k$-spaces for $i=0$ or $i=1$. (In fact, the map for $i=0$ is not surjective and the map for $i=1$ is not injective.)
\end{enumerate}
\end{thm}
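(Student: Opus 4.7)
The plan is to treat the two parts separately, with a \v{C}ech--de Rham spectral sequence argument for (a) and an explicit construction for (b). For (a), choose homogeneous generators $f_1, \ldots, f_r$ of $I$ and consider the \v{C}ech complex $C^\bullet(\underline{f}; R)$, whose terms are finite direct sums of localizations $R_g$ for $g$ a product of the $f_i$'s and whose cohomology is $H^j_I(R)$. Since $R \to \cR$ is flat, the analogous \v{C}ech complex $C^\bullet(\underline{f}; \cR)$ computes $\cR \otimes_R H^j_I(R) \cong H^j_{I\cR}(\cR)$. Applying the de Rham functor termwise produces a double complex in each case, and the map $R \to \cR$ induces a morphism of the two associated spectral sequences converging to total cohomology. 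The main statement then reduces to showing that the completion map $H^i_\dR(R_g) \to H^i_\dR(\cR_g)$ is an isomorphism for each such product $g$ of homogeneous polynomials.

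For this localization step, I would exploit that $R_g$ is $\mathbb{Z}$-graded and that the Euler derivation $E = \sum_i x_i \partial_i$ acts semisimply on it. By the Cartan homotopy formula $\mathcal{L}_E = d\, \iota_E + \iota_E\, d$, each nonzero weight space of the de Rham complex is acyclic, so $H^i_\dR(R_g)$ is computed from the degree-zero part of the de Rham complex---a finite-dimensional subcomplex of $k$-spaces. The analogous reduction for $\cR_g$ requires more care because $E$ acts on $\cR_g$ with only a generalized rather than honest eigenspace decomposition; nevertheless, one can argue that the degree-zero subcomplex of the de Rham complex is unchanged by completion, and extract the desired isomorphism from this finite-dimensional comparison.

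For (b), I would construct an explicit example with an irregular singular point. Take $n=1$ and set $M = \D/\D(x^2\partial + 1)$, the holonomic module whose associated equation $x^2 u' + u = 0$ has formal solution $e^{1/x}$. Computing $H^\bullet_\dR(M)$ amounts to finding the kernel and cokernel of $\partial$ acting on $M$, and likewise for $\cR \otimes_R M$. The irregular singularity produces a genuine index-theoretic discrepancy between the polynomial and formal power-series settings, resulting in a strictly smaller $H^0_\dR(M)$ than $H^0_\dR(\cR \otimes_R M)$ and a strictly larger $H^1_\dR(M)$ than $H^1_\dR(\cR \otimes_R M)$, giving the failures of surjectivity and injectivity respectively.

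The principal obstacle in (a) is the extension of the Euler-derivation homotopy argument from $R_g$ to $\cR_g$: one must ensure the decomposition by weights survives completion, which is not automatic since $\cR$ is only a filtered rather than graded ring. An alternative would bypass the Euler derivation and directly identify the de Rham cohomologies as finite-dimensional invariants determined by the geometry of $\Spec(R) \setminus V(g)$ in a formal neighborhood of the origin. For (b), the obstacle is primarily computational, but the qualitative answer is dictated by the theory of irregular singularities, so the main task is to verify that the explicit example behaves as predicted.
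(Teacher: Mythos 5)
First, a point of orientation: the paper does not prove this theorem at all --- it is quoted from Hartshorne--Polini (their Theorem 6.2 and Example 6.1) purely as motivation, and part (a) is in any case subsumed by the paper's own Theorem \ref{main theorem}, since $H^j_I(R)$ is a graded holonomic $\D$-module when $I$ is homogeneous. So your proposal is necessarily an independent argument, and it contains two genuine gaps.

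In part (a), the spectral-sequence reduction is not valid as stated. From the double complex $\Omega^q(C^p(\underline{f};R))$ you get two spectral sequences. The one filtered by \v{C}ech degree has $E_1^{p,q}=H^q_{\dR}(R_g)$, and your localization lemma would make the completion map an isomorphism there, hence on the abutment (total cohomology). But the assertion to be proved lives on the $E_2$-page of the \emph{other} spectral sequence, $E_2^{p,q}=H^p_{\dR}(H^q_I(R))$, and a morphism of spectral sequences inducing an isomorphism of abutments need not induce isomorphisms on the $E_2$-terms of a second filtration of the same abutment. (It would suffice to also know injectivity on those $E_2$-terms --- but that is essentially Theorem \ref{main theorem}(a), i.e.\ the hard content of the paper.) The standard repair is a Mayer--Vietoris/five-lemma induction on the number of homogeneous generators of $I$, which reduces to exactly the same localization statement. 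That localization step itself is sound: $E=\sum x_i\partial_i$ acts honestly (not just generalized-)semisimply on $\cR_g$ in the completed sense, since each single homogeneous component of $\cR_g$ already lies in $R_g$; consequently the weight-zero subcomplexes of $\Omega^{\bullet}(R_g)$ and $\Omega^{\bullet}(\cR_g)$ literally coincide, and the homotopy $\sum_{w\neq 0}w^{-1}\iota_E\pi_w$ converges termwise because the weight-$w$ pieces have strictly increasing degree.

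In part (b), your module $M=\D/\D(x^2\partial+1)\cong\D\cdot e^{1/x}$ does witness the failure at $i=1$ (one computes $H^1_{\dR}(M)\cong k$ while $H^1_{\dR}(\cM)=0$), so it establishes the displayed ``not an isomorphism for $i=0$ or $i=1$.'' But your claim that $H^0_{\dR}$ strictly grows under completion is false for this example: the formal solution $e^{1/x}=\sum_n x^{-n}/n!$ involves infinitely many negative powers of $x$ and hence does not lie in $k[[x]][x^{-1}]$, so $H^0_{\dR}(M)=H^0_{\dR}(\cM)=0$. To realize the parenthetical (non-surjectivity at $i=0$) the irregularity must sit at infinity rather than at the origin: take $M=\D/\D(\partial-p'(x))\cong R\,e^{p(x)}$ for a nonconstant polynomial $p$ --- Hartshorne and Polini use $p=x^3/3$, which is why the paper notes that $\partial_1$ acts with degree $2$. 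There the formal solution $e^{-p(x)}$ \emph{is} a power series, giving $H^0_{\dR}(M)=0\to H^0_{\dR}(\cM)\cong k$ (not surjective) and $H^1_{\dR}(M)\cong k^2\to H^1_{\dR}(\cM)=0$ (not injective).
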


In the situation of Theorem \ref{HP results on completion}(a), since the ideal $I$ is homogeneous, the $\D$-modules $H^j_I(R)$ are \emph{graded} (which means here that the partial derivatives $\partial_i$ act as $k$-linear maps of degree $-1$). On the other hand, the example of Theorem \ref{HP results on completion}(b) is not a graded $\D$-module ($\partial_1$ acts as a $k$-linear map of degree $2$). Hartshorne and Polini ask \cite[p. 18]{HartshornePolini} whether the completion maps on de Rham cohomology are isomorphisms if $M$ is a \emph{graded} holonomic $\D$-module. 

The main result of this paper is the following:

\begin{bigthm}[Theorem \ref{dR coh of completion of graded}]
\label{main theorem}
Let $M$ be a finitely generated graded left $\D$-module. 
\begin{enumerate}[(a)]
\item The natural map 
\[
H^i_{\dR}(M) \rightarrow H^i_{\dR}(\cR \otimes_R M)
\]
(induced by $M\to \cR\otimes_R M$) is injective for all $i\geq 0$.
\item For each integer $i \geq 0$ such that $\dim_kH^i_{\dR}(M)<\infty$, the natural map
\[
H^i_{\dR}(M) \rightarrow H^i_{\dR}(\cR \otimes_R M)
\]
is an isomorphism of $k$-spaces.
\end{enumerate}
\end{bigthm}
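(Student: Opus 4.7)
The strategy is to exploit the grading on $M$ to define, for each $N \in \mathbb{Z}$, a degree-$N$ projection $H^i_{\dR}(\cR \otimes_R M) \to H^i_{\dR}(M)_N$ and compare the two cohomologies piece by piece. Assigning $dx_i$ degree $+1$ makes $C^\bullet(M) = \Omega^\bullet_R \otimes_R M$ a graded complex, with $(C^p(M))_N = \bigoplus_{|S|=p} M_{N-p}\, dx_S$, and the differential $d = \sum_i dx_i \wedge \partial_i$ preserves total degree (since $\partial_i$ lowers $M$-degree by $1$). Hence $C^\bullet(M) = \bigoplus_N C^\bullet(M)_N$ as a direct sum of subcomplexes, and $H^i_{\dR}(M) = \bigoplus_N H^i_{\dR}(M)_N$.

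\textbf{Projections and injectivity.} Identifying $C^p(\cR \otimes_R M) = \cR \otimes_R C^p(M)$ as $\cR$-modules, the de Rham differential transforms to $d(r \otimes c) = \sum_j \partial_j r \otimes (dx_j \wedge c) + r \otimes d_M c$, and one checks directly that it still preserves total degree. Writing $r \in \cR = \prod_{a \geq 0} R_a$ as $r = \sum_a r_a$ and decomposing $c \in C^p(M)$ as a finite sum $c = \sum_e c_e$, define
\[
\pi_N(r \otimes c) := \sum_e r_{N-e}\, c_e \in C^p(M)_N.
\]
A short check using the relation $rs \otimes c = r \otimes sc$ shows this is well-defined on the tensor product, and commutation with $d$ follows from degree-preservation, so $\pi_N$ induces $\pi_N \colon H^i_{\dR}(\cR \otimes_R M) \to H^i_{\dR}(M)_N$. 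The composite $\pi_N \circ i$ is the natural projection onto the $N$-th summand of $H^i_{\dR}(M) = \bigoplus_N H^i_{\dR}(M)_N$, so (a) is immediate: if $i(\alpha) = 0$ then every graded piece of $\alpha$ vanishes.

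\textbf{Surjectivity via completeness.} For (b), assume $\dim_k H^i_{\dR}(M) < \infty$, so $H^i_{\dR}(M)_N = 0$ outside a finite set of $N$. For $\beta \in H^i_{\dR}(\cR \otimes_R M)$, the finite sum $\alpha := \sum_N \pi_N(\beta) \in H^i_{\dR}(M)$ satisfies $\pi_N(\beta - i(\alpha)) = 0$ for every $N$; to conclude $\beta = i(\alpha)$, represent $\beta - i(\alpha) = [\xi]$ with $\xi$ a cocycle, and write $\pi_N(\xi) = d\eta_N$ for some $\eta_N \in C^{i-1}(M)_N$. Since $\xi \in \cR \otimes_R C^i(M)$ is a finite combination of tensors with homogeneous $M$-components and $\cR$ sits in non-negative degrees, the total degrees in $\xi$ are bounded below, so we may take $\eta_N = 0$ for $N \ll 0$. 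Consider the decreasing filtration $F^N C^\bullet(\cR \otimes_R M) := \bigcap_{T < N} \ker \pi_T$, which is $d$-stable with $\gr_F^N = C^\bullet(M)_N$. Granting that this filtration is separated and complete on $C^\bullet(\cR \otimes_R M)$, the formal sum $\eta := \sum_N \eta_N$ is Cauchy and assembles to a genuine element with $d\eta = \sum_N \pi_N(\xi) = \xi$, so $\beta = i(\alpha)$.

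\textbf{Main obstacle.} The technical heart is establishing completeness (and separation) of $F^\bullet$ on $C^\bullet(\cR \otimes_R M)$. When $M$ is bounded below and finitely generated as an $R$-module, $\cR \otimes_R M = \prod_d M_d$ and completeness is transparent; but a finitely generated graded $\D$-module need not be bounded below nor finitely generated over $R$ (e.g., $M = R_{x_1}$ has $M_d \neq 0$ for every $d \in \mathbb{Z}$, and $\cR \otimes_R M = \cR[x_1^{-1}]$ is not an $\mm$-adic completion of $M$). In such cases, $\cR \otimes_R M$ is best described as a direct limit $\varinjlim_\ell \widehat{M^{(\ell)}}$ of $\mm$-adic completions of bounded-below finitely generated $R$-submodules $M^{(\ell)} = F_\ell \D \cdot \{m_1, \ldots, m_r\}$ arising from the order filtration on $\D$ applied to a set of $\D$-generators of $M$. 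Verifying completeness of $F^\bullet$ from this direct-limit structure—and in particular confirming that the assembled $\eta$ actually lies in $C^{i-1}(\cR \otimes_R M)$ rather than only in a larger formal completion $\prod_N C^{i-1}(M)_N$—is where the bulk of the work will lie.
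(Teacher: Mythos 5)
Your part (a) is correct and is a genuinely different (and arguably cleaner) argument than the paper's: the paper proves injectivity by passing to a graded free $\D$-resolution and showing that a polynomial cycle which bounds over $\cR$ already bounds over $R$, whereas your degree-$N$ projections $\pi_N$ give injectivity immediately once one checks (as you do) that $\pi_N$ is a well-defined chain map with $\pi_N\circ i$ equal to the projection onto $H^i_{\dR}(M)_N$.

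For part (b), however, the step you flag as the ``main obstacle'' is a genuine gap, and the completeness you propose to grant is in fact \emph{false} in the generality you need. Take $n=2$ and $M=R_{x_1}$, a graded holonomic $\D$-module, so that $\cR\otimes_R M=k[[x_1,x_2]][x_1^{-1}]$. The elements $\eta_N=x_1^{-N}x_2^{2N}$ are homogeneous of degree $N$, the sequence is Cauchy for your filtration $F^\bullet$, but $\sum_{N\ge 0}x_1^{-N}x_2^{2N}$ does not lie in $k[[x_1,x_2]][x_1^{-1}]$ (any element there has bounded negative powers of $x_1$). So the assembled $\eta$ may live only in $\prod_N C^{i-1}(M)_N$ and not in $C^{i-1}(\cR\otimes_R M)$, and nothing in your construction controls the choice of the $\eta_N$ well enough to prevent this. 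The direct-limit description $\varinjlim_\ell\widehat{M^{(\ell)}}$ does not obviously help, since the $\eta_N$ produced by exactness in high degrees need not all come from a single $M^{(\ell)}$. The paper circumvents exactly this issue by not working with the de Rham complex directly: it identifies $H^i_{\dR}(M)$ with $\Tor^{\D}_{n-i}(R^{\tau},M)$ (via \cite[Propositions 6.2.5.1, 6.2.5.2]{Bjork}, together with a diagram showing this identification is compatible with completion), and computes the latter from a graded \emph{finite free} $\D$-resolution $F_\bullet$ of $M$. The resulting complexes have terms $\oplus_l R(\gamma_{l,j})$ and $\oplus_l \cR(\gamma_{l,j})$, whose graded pieces are finite-dimensional and for which $\oplus_l\cR(\gamma_{l,j})$ genuinely is the degree-wise product of $\oplus_l R(\gamma_{l,j})$; there your ``split off the high-degree tail and sum the bounding elements'' argument does converge. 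So the missing ingredient in your proof is precisely this replacement of the de Rham complex by a complex of finite free graded $R$-modules, i.e., the Tor interpretation together with its compatibility with the completion maps.
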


As Theorem \ref{HP results on completion}(b) shows, even Theorem \ref{main theorem}(a) may fail if $M$ is not graded. The hypothesis of finite-dimensionality in Theorem \ref{main theorem}(b) is also necessary: see Remark \ref{fin dim necessary} below.

Note that in the statement of Theorem \ref{main theorem} the graded $\D$-module $M$ is {\it not} assume to be holonomic. However, if $M$ is a graded \emph{holonomic} $\D$-module, then all of its de Rham cohomology spaces are finite-dimensional, so we immediately obtain a positive answer to Hartshorne and Polini's question:

\begin{cor}\label{answer to HP question}
Let $M$ be a graded holonomic left $\D$-module. For all $i \geq 0$, the completion map
\[
H^i_{\dR}(M) \rightarrow H^i_{\dR}(\cR \otimes_R M)
\]
is an isomorphism of $k$-spaces.
\end{cor}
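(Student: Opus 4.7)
My plan is to deduce Corollary \ref{answer to HP question} directly from Theorem \ref{main theorem}(b) by verifying, for a graded holonomic left $\D$-module $M$, the two hypotheses of that theorem: namely, that $M$ is finitely generated as a $\D$-module, and that $\dim_k H^i_{\dR}(M) < \infty$ for every $i \geq 0$.

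The first hypothesis is immediate from the definition of holonomicity: a holonomic $\D$-module is, by definition, a finitely generated $\D$-module whose characteristic variety has the minimum possible dimension $n$ (equivalently, one whose grade over $\D$ equals $n$). Combined with the standing assumption that $M$ is graded, this places $M$ in the class to which Theorem \ref{main theorem} applies.

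The second hypothesis is the classical finiteness theorem for de Rham cohomology of holonomic $\D$-modules on affine space. The cleanest way I would argue it is to observe that the de Rham cohomology $H^i_{\dR}(M)$ coincides (up to a degree shift) with the cohomology of the $\D$-module direct image $p_+ M$ along the structure map $p\colon \Spec R \to \Spec k$. By Bernstein's preservation theorem, $p_+$ sends holonomic $\D$-modules to holonomic complexes; and a holonomic $\D$-module over the point $\Spec k$ is simply a finite-dimensional $k$-vector space. One could alternatively cite the statement directly from a standard reference on algebraic $\D$-modules (for instance, Hotta--Takeuchi--Tanisaki).

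With both hypotheses verified for every $i$, Theorem \ref{main theorem}(b) supplies the isomorphism $H^i_{\dR}(M) \xrightarrow{\sim} H^i_{\dR}(\cR \otimes_R M)$ for all $i \geq 0$. I do not anticipate any substantive obstacle at this stage: the entire mathematical content has been placed in Theorem \ref{main theorem}, and the corollary is a packaging of that result together with the known finiteness theorem for holonomic de Rham cohomology.
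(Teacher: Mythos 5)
Your proposal is correct and matches the paper's own deduction: the corollary follows from Theorem \ref{main theorem}(b) once one notes that a holonomic $\D$-module is finitely generated and that its de Rham cohomology spaces are finite-dimensional, the latter being exactly Theorem \ref{dRfindim}(a) (cited from Bj\"ork) in the paper. Your sketch of the finiteness theorem via the direct image to a point is just the standard proof of that cited fact, so there is no substantive difference in approach.
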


After briefly recalling some preliminary materials and fixing notation in section \ref{preliminaries}, we study the completion operation on $\D$-modules in section \ref{completion}. Hartshorne and Polini observe that if $M$ is a local cohomology module of $R$ (and therefore holonomic), $\cR \otimes_R M$ is again holonomic; we prove this statement for arbitrary holonomic $M$ in this section. Finally, in section \ref{derham}, we give a proof of Theorem \ref{main theorem}. We conclude by outlining a shorter proof that, if $M$ is a graded holonomic left $\D$-module, then $H^i_{\dR}(M)$ and $H^i_{\dR}(\cR \otimes_R M)$ have the same dimension (this shorter proof has the deficiency that it says nothing about whether the natural completion maps are isomorphisms).

\section{Preliminaries}\label{preliminaries}

In this section, we collect some preliminary materials on (graded) $\D$-modules and de Rham cohomology. Much of the basic material is recalled already in Hartshorne and Polini's \cite{HartshornePolini} as well as the authors' earlier \cite{SwitalaZhangGradedDual}; we will assume the reader is familiar either with the introductory sections of those papers or with the basic reference \cite{Bjork}. In particular, we will assume the reader is familiar with the notion of a \emph{holonomic} $\D$-module.

\subsection{Notation} Throughout this paper, $k$ is a field of characteristic zero. We denote by $R = k[x_1, \ldots, x_n]$ the \emph{polynomial} ring over $k$ in the variables $x_1, \ldots, x_n$ for some $n \geq 1$, and by $\cR = k[[x_1, \ldots, x_n]]$ the \emph{formal power series} ring over $k$ in the same variables. Observe that $\cR$ is the $\mm$-adic completion of $R$, where $\mm \subseteq R$ is the maximal ideal generated by $x_1, \ldots, x_n$.

Objects without ``hats'' will be associated with the ring $R$, and the corresponding objects with ``hats'' will be associated with $\cR$. Therefore, we will write $\D$ for the ring $\D(R,k)$ of $k$-linear differential operators on $R$ and $\cD$ for the ring $\D(\cR,k)$ of $k$-linear differential operators on $\cR$. Recall that $\D$ (resp. $\cD$) is generated over $R$ (resp. over $\cR$) by the partial differentiation operators $\partial_1, \ldots, \partial_n$. A ``$\D$-module'' $M$ will always be assumed to be a \emph{left} module, unless stated otherwise, and similarly for $\cD$-modules. 

If $M$ is a $\D$-module, we denote by $\cM$ its \emph{completion} $\cR \otimes_R M$ (see section \ref{completion}). Observe that if $M$ is not finitely generated as an $R$-module, this object need not be isomorphic to the $\mm$-adic completion of $M$; the notation $\cM$ and term ``completion'' are therefore somewhat abusive. Likewise, $\cD$ is not being regarded as an adic completion of the non-commutative ring $\D$. 

\subsection{The de Rham complex} Given any $\D$-module $M$, we can define its \emph{de Rham complex} $\Omega^{\bullet}(M)$, whose objects are $\D$-modules but whose differentials are merely $k$-linear.  It is defined as follows \cite[\S 1.6]{Bjork}: for $0 \leq i \leq n$, $\Omega^i(M)$ is a direct sum of $n \choose i$ copies of $M$, indexed by $i$-tuples $1 \leq j_1 < \cdots < j_i \leq n$.  The summand corresponding to such an $i$-tuple will be written as $M \, dx_{j_1} \wedge \cdots \wedge dx_{j_i}$. The $k$-linear differentials $d^i: \Omega^i(M) \rightarrow \Omega^{i+1}(M)$ are defined by 
\[
d^i(m \,dx_{j_1} \wedge \cdots \wedge dx_{j_i}) = \sum_{s=1}^n \partial_s(m)\, dx_s \wedge dx_{j_1} \wedge \cdots \wedge dx_{j_i},
\]
with the usual exterior algebra conventions for rearranging the wedge terms, and extended by linearity to the direct sum. The cohomology objects $h^i(\Omega^{\bullet}(M))$, which are $k$-spaces, are called the \emph{de Rham cohomology spaces} of the $\D$-module $M$, and are denoted $H^i_{\dR}(M)$. If $N$ is a $\cD$-module, its de Rham complex $\Omega^{\bullet}(N)$, with cohomology spaces $H^i_{\dR}(N)$, has exactly the same definition. The objects of this complex are $\cD$-modules, but its differentials are again merely $k$-linear.

Part (a) of the following theorem is standard (see \cite[Theorem 1.6.1]{Bjork}); part (b) is due to van den Essen \cite[Proposition 2.2]{vdEcokernelsII}:

\begin{thm}\label{dRfindim}
\begin{enumerate}[(a)]
\item Let $M$ be a holonomic $\D$-module. The de Rham cohomology spaces $H^i_{\dR}(M)$ are finite-dimensional over $k$ for all $i \geq 0$.
\item Let $N$ be a holonomic $\cD$-module. The de Rham cohomology spaces $H^i_{\dR}(N)$ are finite-dimensional over $k$ for all $i \geq 0$.
\end{enumerate}
\end{thm}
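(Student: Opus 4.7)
The plan is to prove (a) and (b) by rather different methods, reflecting the fact that $\cD$ lacks a Bernstein-style filtration, since $\cR$ is not finitely generated over $k$.

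For (a), I would identify the de Rham complex with an Ext complex. The left $\D$-module $R$ has the presentation $R \cong \D/\bigl(\sum_{i=1}^n \D \partial_i\bigr)$ and admits a free resolution over $\D$ given by the Koszul complex associated to the commuting operators $\partial_1, \ldots, \partial_n$. Applying $\Hom_\D(-, M)$ recovers, up to reindexing and signs, the de Rham complex $\Omega^\bullet(M)$, so $H^i_\dR(M) \cong \Ext^i_\D(R, M)$. Since $R$ is itself holonomic, finite-dimensionality follows from the standard result, proved via good filtrations and the Bernstein inequality, that $\Ext$ between two holonomic $\D$-modules is a finite-dimensional $k$-space. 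An alternative direct route is to filter $\Omega^\bullet(M)$ via a good filtration on $M$, pass to the associated graded to obtain the Koszul complex of $\gr M$ in the variables $\xi_1, \ldots, \xi_n$ over $k[x,\xi]$, and invoke the Lagrangian structure of the characteristic variety to bound dimensions.

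For (b), van den Essen's strategy is induction on $n$. The crucial input is his theorem that for any holonomic $\cD$-module $N$, both $\ker(\partial_n \colon N \to N)$ and $\coker(\partial_n \colon N \to N)$ are holonomic as modules over $\cD' = \D(k[[x_1, \ldots, x_{n-1}]], k)$, with $\partial_n$ acting $\cD'$-linearly. Granting this, one separates out the $dx_n$ wedge factor in $\Omega^\bullet_{\cR}(N)$ to obtain a short exact sequence of complexes whose long exact cohomology sequence expresses $H^i_\dR(N)$ in terms of $H^i_\dR(\ker \partial_n)$ and $H^{i-1}_\dR(\coker \partial_n)$ over the smaller power series ring. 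The inductive hypothesis then finishes the argument, with the base case $n=0$ being trivial. The main obstacle is precisely this kernel/cokernel holonomicity theorem for $\cD$-modules: once it is accepted the induction is formal, but proving it requires delicate formal-power-series-specific arguments, since no global Bernstein-type degree function is available on $\cD$.
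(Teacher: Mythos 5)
The paper offers no proof of this theorem: it is quoted as background, with part (a) attributed to Bj\"ork \cite[Theorem 1.6.1]{Bjork} and part (b) to van den Essen \cite[Proposition 2.2]{vdEcokernelsII}. Your outline correctly reproduces exactly those two standard arguments --- the identification $H^i_{\dR}(M) \cong \Ext^i_{\D}(R,M)$ together with finite-dimensionality of $\Ext$ between holonomic modules for (a), and van den Essen's induction on $n$ via holonomicity of $\ker(\partial_n)$ and $\coker(\partial_n)$ over the smaller power series ring for (b) --- so there is nothing to add beyond noting that you have correctly flagged the kernel/cokernel theorem as the genuinely hard input in part (b).
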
 

\subsection{Graded $\D$-modules} We give the polynomial ring $R = k[x_1, \ldots, x_n]$ its standard grading, {\it i.e.}, $\deg(x_i)=1$ for $1 \leq i \leq n$ and $\deg(c)=0$ for $c\in k$. Let $M$ be a (left) $\D$-module whose underlying $R$-module is given a grading $M = \oplus_{l \in \mathbb{Z}} M_l$ (meaning that $R_i \cdot M_j \subseteq M_{i+j}$ for all $i,j$, where $R_i$ is the degree-$i$ component of $R$). We say that $M$ is a \emph{graded $\D$-module} if for all $l \in \mathbb{Z}$ and $1 \leq i \leq n$, we have $\partial_i(M_l) \subseteq M_{l-1}$. There is an entirely analogous notion of graded \emph{right} $\D$-module. If $M$ is a finitely generated graded $\D$-module, $M$ admits a resolution by finite free graded $\D$-modules.

\subsection{Transposition} There is a natural \emph{transposition} operation that converts left $\D$-modules to right $\D$-modules and \emph{vice versa}. (This is not described in the reference \cite{Bjork}; see \cite[\S 16]{Coutinho} instead.) The \emph{standard transposition} $\tau: \D \rightarrow \D$ is defined by
\[
\tau(f \partial_1^{i_1} \cdots \partial_n^{i_n}) = (-1)^{i_1 + \cdots + i_n}\partial_1^{i_1} \cdots \partial_n^{i_n}f
\]
for all $f \in R$, extended to all of $\D$ by $k$-linearity (observe that the same operation makes sense for formal power series). If $M$ is a right $\D$-module, the \emph{transpose} $M^{\tau}$ of $M$ is the left $\D$-module defined as follows: we have $M^{\tau} = M$ as Abelian groups, and the left $\D$-action $\ast$ on $M^{\tau}$ is given by $\delta \ast m = m \cdot \tau(\delta)$ for all $\delta \in \D$ and $m \in M (=M^{\tau})$. The transpose of a \emph{graded} left $\D$-module is a \emph{graded} right $\D$-module, and conversely.

\section{The completion functor for $\D$-modules}\label{completion}

Let $M$ be a $\D$-module. As observed by Hartshorne and Polini in \cite[\S 6]{HartshornePolini}, the $\cR$-module $\cR \otimes_R M$ can be given a natural structure of $\cD$-module. (Recall from section \ref{preliminaries} that we will abuse notation by writing $\cM$ for this $\cR$-module.) In this section, we study the basic properties of the functor $M \mapsto \cM$ from $\D$-modules to $\cD$-modules.  

As a special case of \cite[Lemma 1.2.1]{Hotta}, we have the following recipe for prescribing $\D$-module (resp. $\cD$-module) structures on $R$-modules (resp. $\cR$-modules):

\begin{lem}\label{extending R-structures}
Let $M$ be an $R$-module. To give $M$ a structure of $\D$-module extending the given $R$-module structure is the same as to give \emph{pairwise commuting} $k$-linear maps $\partial_i: M \rightarrow M$ such that, for all $1 \leq i \leq n$, all $r \in R$, and all $m \in M$, we have $\partial_i(rm) = r\partial_i(m) + \partial_i(r)m$. (The analogous statement for $\cR$-modules $\cM$ also holds.)
\end{lem}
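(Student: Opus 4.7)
The forward implication is immediate from the structure of $\D$. If $M$ is a $\D$-module, then each $\partial_i \in \D$ acts on $M$ as a $k$-linear operator, and the desired properties are just identities in $\D$ evaluated on $m$: the operators $\partial_i$ pairwise commute because $[\partial_i, \partial_j] = 0$ in $\D$, and the Leibniz rule $\partial_i(rm) = r\partial_i(m) + \partial_i(r)m$ is the identity $\partial_i \cdot r = r \cdot \partial_i + \partial_i(r)$ in $\D$ applied to $m$.

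For the reverse direction, my plan is to exploit the fact that $\D$ is a free left $R$-module on the PBW basis $\{\partial^\alpha = \partial_1^{\alpha_1}\cdots\partial_n^{\alpha_n} : \alpha \in \mathbb{N}^n\}$, and that its multiplication is determined by the relations $[\partial_i, \partial_j] = 0$ and $[\partial_i, r] = \partial_i(r)$ for $r \in R$. Given the hypothesized $k$-linear operators $\partial_i$ on $M$, I first define the action of each $\partial^\alpha$ on $M$ by iteration; the pairwise commuting hypothesis makes this unambiguous. Then I define the action of an arbitrary element $\sum_{\alpha} r_\alpha \partial^\alpha \in \D$ on $m \in M$ by $(\sum_{\alpha} r_\alpha \partial^\alpha) \cdot m = \sum_{\alpha} r_\alpha \cdot (\partial^\alpha(m))$, using the given $R$-module structure on $M$. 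This is a well-defined additive map $\D \times M \to M$ that is $R$-linear in the first variable and by construction restricts to the original $R$-action.

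What remains is the verification that $(\delta_1 \delta_2)\cdot m = \delta_1 \cdot (\delta_2 \cdot m)$; this is really the only check, and would be the main obstacle, though it is entirely mechanical. Using bilinearity and $R$-linearity, it reduces to the case $\delta_1 = \partial_i$ and $\delta_2 = r \partial^\alpha$. Expanding $\partial_i \cdot r\partial^\alpha = r\partial_i \partial^\alpha + \partial_i(r) \partial^\alpha$ on the $\D$-side and comparing with $\partial_i\bigl(r\cdot \partial^\alpha(m)\bigr) = r\cdot \partial_i(\partial^\alpha(m)) + \partial_i(r)\cdot \partial^\alpha(m)$ on the $M$-side, the equality is precisely the Leibniz rule hypothesis applied to $\partial^\alpha(m)$. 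For the $\cR$-variant, the same strategy goes through verbatim: $\cD$ is a free left $\cR$-module on $\{\partial^\alpha\}$ with the analogous commutation relations $[\partial_i, f] = \partial_i(f)$ for $f \in \cR$, and every element of $\cD$ is a \emph{finite} $\cR$-linear combination of the $\partial^\alpha$'s, so no convergence issue arises; one defines the $\cD$-action on $\cM$ by the same formula and performs the same reduction, now using the Leibniz rule for $f \in \cR$ supplied by the hypothesis.
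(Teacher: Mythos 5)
Your argument is correct, but note that the paper offers no proof of this lemma at all: it is quoted as a special case of \cite[Lemma 1.2.1]{Hotta}, so there is no in-paper argument to compare against. Your direct verification is the standard one and works. The forward direction is exactly as you say. For the converse, the one step that deserves to be written out rather than waved at is the reduction of associativity to the case $\delta_1 = \partial_i$: the prescription $\sum_\alpha r_\alpha \partial^\alpha \mapsto \bigl(m \mapsto \sum_\alpha r_\alpha\,\partial^\alpha(m)\bigr)$ is a well-defined $k$-linear map $\rho\colon \D \to \End_k(M)$ because $\D$ is free as a left $R$-module on the $\partial^\alpha$, but checking $\rho(\delta_1\delta_2) = \rho(\delta_1)\rho(\delta_2)$ for $\delta_1 = \partial_i$ (and, trivially, for $\delta_1 = r \in R$) against arbitrary $\delta_2$ only gives multiplicativity on generators; one then needs a short induction on the order of $\delta_1$ (if $\rho(g\delta)=\rho(g)\rho(\delta)$ for every $g$ in a generating set and every $\delta$, then multiplicativity holds for products of generators, and $k$-linearity finishes). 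This is mechanical, as you acknowledge, but it is precisely where the pairwise-commuting hypothesis is consumed twice: once to make $\partial^\alpha(m)$ independent of the order of iteration, and once to identify $\rho(\partial_i\partial^\beta)$ with $\rho(\partial_i)\rho(\partial^\beta)$ after rewriting $\partial_i\partial^\beta$ in the PBW basis. Your treatment of the $\cR$ case is also right: every element of $\cD$ is a finite $\cR$-linear combination of the $\partial^\alpha$, so the identical argument applies with the Leibniz hypothesis now invoked for $f \in \cR$.
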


We will, however, write $\partial_i m$ or $\partial_i \cdot m$ instead of $\partial_i(m)$, reserving the notation $\partial_i(-)$ for application of $\partial_i$ to elements of $R$ or $\cR$.

\begin{prop}\label{Hartshorne structure}
Let $M$ be a $\D$-module, and let $\cM$ be the $\cR$-module $\cR \otimes_R M$. For all $i$ and all pure tensors $\wh{s} \otimes m \in \cM$, define
\[
\partial_i \cdot (\wh{s} \otimes m) = \partial_i(\wh{s}) \otimes m + \wh{s} \otimes \partial_i \cdot m.
\]
Then (extending by $\cR$-linearity) we obtain a structure of $\cD$-module on $\cM$. Furthermore, using this $\cD$-module structure, the operation $M \mapsto \cM$ is a functor from $\D$-modules to $\cD$-modules.
\end{prop}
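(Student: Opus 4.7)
The plan is to invoke Lemma \ref{extending R-structures} in the $\cR$-module setting: it suffices to check that each prescribed map $\partial_i\colon \cM\to\cM$ is a well-defined $k$-linear endomorphism, that these $n$ endomorphisms pairwise commute, and that each satisfies the Leibniz rule with respect to $\cR$.

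First I would verify well-definedness. The formula $\partial_i\cdot(\wh{s}\otimes m)=\partial_i(\wh{s})\otimes m+\wh{s}\otimes\partial_i m$ defines a $k$-bilinear map $\cR\times M\to\cM$, so to descend it to $\cR\otimes_R M$ I need to check $R$-balancing, i.e.\ that both sides agree after replacing $(\wh{s},m)$ by $(\wh{s}r,m)$ and $(\wh{s},rm)$ for $r\in R$. This is a short computation using the Leibniz rule for $\partial_i$ on $\cR$ (which gives $\partial_i(\wh{s}r)=\partial_i(\wh{s})r+\wh{s}\partial_i(r)$) and the Leibniz rule built into the $\D$-action on $M$ (which gives $\partial_i(rm)=\partial_i(r)m+r\,\partial_i m$); the cross term $\wh{s}\otimes\partial_i(r)m=\wh{s}\partial_i(r)\otimes m$ matches on both sides. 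Once descent is known, $\cR$-linearity of the extension and $k$-linearity are automatic.

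Next I would verify the two conditions required by Lemma \ref{extending R-structures}. For the Leibniz rule over $\cR$, I expand $\partial_i\cdot(\wh{t}\wh{s}\otimes m)$ using the defining formula and the Leibniz rule on $\cR$, and regroup to obtain $\partial_i(\wh{t})(\wh{s}\otimes m)+\wh{t}\bigl(\partial_i\cdot(\wh{s}\otimes m)\bigr)$. For commutativity, I compute $\partial_i\partial_j(\wh{s}\otimes m)$ directly from the formula; this yields a sum of four terms, $\partial_i\partial_j(\wh{s})\otimes m$, $\partial_j(\wh{s})\otimes\partial_i m$, $\partial_i(\wh{s})\otimes\partial_j m$, and $\wh{s}\otimes\partial_i\partial_j m$, each of which is symmetric in $i,j$ by commutativity of partials on $\cR$ and on $M$.

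Finally, for functoriality, given a $\D$-linear map $f\colon M\to N$, I consider $\wh{f}=\mathrm{id}_{\cR}\otimes f\colon\cM\to\cN$. It is $\cR$-linear by construction, and $\cD$-linearity reduces to checking that it commutes with each $\partial_i$, which is immediate from the formula and the $\D$-linearity of $f$. I expect no real obstacle here: every step is a direct application of the Leibniz rule together with the definition of the tensor product, and the only point requiring any care is the balancing calculation in the first step.
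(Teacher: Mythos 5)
Your proposal is correct and follows essentially the same route as the paper: both reduce to Lemma \ref{extending R-structures}, verify the Leibniz identity for the prescribed $\partial_i$ on pure tensors, and establish functoriality by checking that $\mathrm{id}_{\cR}\otimes f$ commutes with each $\partial_i$. You are in fact slightly more thorough than the paper, which leaves the $R$-balancing (well-definedness of the formula on $\cR\otimes_R M$) and the pairwise commutativity of the $\partial_i$ implicit.
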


\begin{proof}
By Lemma \ref{extending R-structures}, it suffices to check that for all $\wh{r} \in \cR$ and $1 \leq i \leq n$, the actions of $\partial_i \wh{r} - \wh{r} \partial_i$ and $\partial_i(\wh{r})$ on the pure tensor $\wh{s} \otimes m$ coincide. Indeed, we have
\begin{align*}
(\partial_i \wh{r} - \wh{r} \partial_i) \cdot (\wh{s} \otimes m) &= \partial_i \wh{r} \cdot (\wh{s} \otimes m) - \wh{r} \partial_i \cdot (\wh{s} \otimes m)\\
&= \partial_i \cdot (\wh{r}\wh{s} \otimes m) - \wh{r} \cdot (\partial_i(\wh{s}) \otimes m + \wh{s} \otimes \partial_i \cdot m)\\
&= \partial_i(\wh{r}\wh{s}) \otimes m + \wh{r}\wh{s} \otimes \partial_i \cdot m - \wh{r}\partial_i(\wh{s}) \otimes m - \wh{r}\wh{s} \otimes  \partial_i \cdot m\\
&= (\partial_i(\wh{r}\wh{s}) - \wh{r}\partial_i(\wh{s})) \otimes m\\
&= \partial_i(\wh{r})\wh{s} \otimes m,
\end{align*}
since $\partial_i$ is a derivation. It follows that $\cM$ is a $\cD$-module. For the functoriality, suppose that $\delta: M \rightarrow N$ is a map of $\D$-modules. We claim that
\[
\wh{\delta} = \mathrm{id}_{\cR} \otimes \delta: \cM \rightarrow \cN
\]
is a map of $\cD$-modules. Since $\wh{\delta}$ is clearly $\cR$-linear, it is enough to show that 
\[
\wh{\delta}(\partial_i \cdot (\wh{s} \otimes m)) = \partial_i \cdot \wh{\delta}(\wh{s} \otimes m)
\]
for all $i$ and all pure tensors $\wh{s} \otimes m \in \cM$, for which we simply calculate:
\begin{align*}
\wh{\delta}(\partial_i \cdot (\wh{s} \otimes m)) &= \wh{\delta}(\partial_i(\wh{s}) \otimes m) + \wh{\delta}(\wh{s} \otimes \partial_i \cdot m)\\
&= \partial_i(\wh{s}) \otimes \delta(m) + \wh{s} \otimes \delta(\partial_i \cdot m)\\
&= \partial_i(\wh{s}) \otimes \delta(m) + \wh{s} \otimes \partial_i \cdot \delta(m)\\
&= \partial_i \cdot \wh{\delta}(\wh{s} \otimes m),
\end{align*}
using the $\D$-linearity of $\delta$. This completes the proof.
\end{proof}

The definition of the $\cD$-structure on $\cM$ just given depends \emph{a priori} on the choice of coordinates $\{x_i, \partial_i\}$. There is an alternative, coordinate-free definition which we will also find useful below. 

\begin{prop}\label{tensor structure}
Let $M$ be a $\D$-module. Form the tensor product $\cD \otimes_{\D} M$ using the right $\D$-module structure on $\cD$ defined via right multiplication by the subring $\D \subseteq \cD$, and regard this tensor product as a left $\cD$-module via left multiplication on the first tensor factor.

There is an isomorphism $\cD \otimes_{\D} M \cong \cR \otimes_R M$ of (left) $\cD$-modules, where the $\cD$-module structure on the left-hand side is the one just described, and the $\cD$-module structure on the right-hand side is the one given in Proposition \ref{Hartshorne structure}.
\end{prop}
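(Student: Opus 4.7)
The plan is to first establish an isomorphism $\cD \cong \cR \otimes_R \D$ of $(\cR, \D)$-bimodules, and then conclude by associativity of tensor product. The starting observation is that $\D$ is free as a left $R$-module with basis $\{\partial^\alpha\}_{\alpha \in \mathbb{N}^n}$, and likewise $\cD$ is free as a left $\cR$-module on the same basis of monomials in the partial derivatives. Consequently, the $R$-balanced multiplication map $\mu: \cR \otimes_R \D \rightarrow \cD$ sending $\wh{r} \otimes \delta$ to $\wh{r}\delta$ is a bijection, because it carries the decomposition $\bigoplus_\alpha \cR \otimes_R R\partial^\alpha$ of the source identically onto the decomposition $\bigoplus_\alpha \cR\partial^\alpha$ of the target. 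By construction $\mu$ is left $\cR$-linear and intertwines the right $\D$-actions (right multiplication on the second tensor factor on the source, right multiplication in $\cD$ on the target).

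Tensoring $\mu$ with $M$ over $\D$ and invoking associativity of the tensor product gives a left $\cR$-module isomorphism
\[
\cD \otimes_{\D} M \cong (\cR \otimes_R \D) \otimes_{\D} M \cong \cR \otimes_R M
\]
under which a pure tensor $\wh{r} \otimes m$ on the right corresponds to $\wh{r} \otimes m$ on the left (with $\wh{r}$ regarded as an order-zero operator in $\cD$). To promote this to a $\cD$-module isomorphism, by Lemma \ref{extending R-structures} it suffices to check that each $\partial_i$ acts in the same way on both sides. On $\cD \otimes_\D M$ the action is by left multiplication on the first factor, so the commutation relation $\partial_i \wh{r} = \wh{r}\partial_i + \partial_i(\wh{r})$ inside $\cD$ yields
\[
\partial_i \cdot (\wh{r} \otimes m) = \partial_i \wh{r} \otimes m = \wh{r}\partial_i \otimes m + \partial_i(\wh{r}) \otimes m = \wh{r} \otimes \partial_i \cdot m + \partial_i(\wh{r}) \otimes m,
\]
where the last equality uses that $\partial_i \in \D$ and can therefore be moved across $\otimes_\D$. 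This is precisely the formula defining the $\cD$-structure of Proposition \ref{Hartshorne structure}, so the isomorphism is $\cD$-linear.

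There is no serious obstacle here; the only nontrivial ingredient is the bimodule identification $\cD \cong \cR \otimes_R \D$, which rests on the standard fact that the monomials $\{\partial^\alpha\}$ form simultaneously a free left $R$-basis of $\D$ and a free left $\cR$-basis of $\cD$. Once that is in hand, the remainder of the argument is formal manipulation with tensor products together with the single commutator computation above.
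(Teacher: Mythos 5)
Your proof is correct and follows essentially the same route as the paper's: the bimodule identification $\cR \otimes_R \D \cong \cD$ via the free $\cR$-basis of monomials in the $\partial_i$, associativity of tensor product, and the single commutator computation $\partial_i \wh{r} = \wh{r}\partial_i + \partial_i(\wh{r})$ to match the two $\cD$-actions. No issues.
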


\begin{proof}
Since $\cD$ is free as an $\cR$-module on the monomials in $\partial_1, \ldots, \partial_n$, there is a natural isomorphism $\cR \otimes_R \D \cong \cD$ as $\cR$-modules, given by $\wh{r} \otimes \delta \mapsto \wh{r}\delta$. This is also an isomorphism of right $\D$-modules, where the right $\D$-module structure on $\cR \otimes_R \D$ is given by right multiplication on the second tensor factor. It follows that we have isomorphisms
\[
\cR \otimes_R M \cong \cR \otimes_R (\D \otimes_{\D} M) \cong (\cR \otimes_R \D) \otimes_{\D} M \cong \cD \otimes_{\D} M
\]
of $k$-spaces, where the first two are the obvious canonical isomorphisms. It is clear that elements of $\cR$ act in the same way on both sides, and that the composite isomorphism (reading left to right) carries $\wh{r} \otimes_R m$ to $\wh{r} \otimes_{\D} m$ (and therefore $\wh{r} \otimes_R (\delta \cdot m)$ to $\wh{r}\delta \otimes_{\D} m$). Therefore, we need only check that the action of $\partial_i$ is respected. Given a pure tensor $\wh{r} \otimes_R m \in \cR \otimes_R M$, we have $\partial_i \cdot (\wh{r} \otimes_R m) = \partial_i(\wh{r}) \otimes_R m + \wh{r} \otimes_R \partial_i \cdot m$ by Proposition \ref{Hartshorne structure}, which is carried by the composite isomorphism to $\partial_i(\wh{r}) \otimes_{\D} m + \wh{r} \otimes_{\D} \partial_i \cdot m$. On the other hand, we have (using the $\cD$-module structure on $\cD \otimes_{\D} M$ defined in the present proposition)
\begin{align*}
\partial_i \cdot (\wh{r} \otimes_{\D} m) &= \partial_i \wh{r} \otimes_{\D} m\\ &= \partial_i(\wh{r}) \otimes_{\D} m + \wh{r}\partial_i \otimes_{\D} m \\ &= \partial_i(\wh{r}) \otimes_{\D} m + \wh{r} \otimes_{\D} \partial_i \cdot m,
\end{align*}
since $\partial_i$ is a derivation. The proof is complete.
\end{proof}

We next record some basic properties of the functor $M \mapsto \cM$. Note that by Proposition \ref{tensor structure}, we may view $\cM$ either as $\cR \otimes_R M$ or as $\cD \otimes_{\D} M$, whichever is more convenient. \emph{A priori}, the notation $\cD$ is ambiguous, referring on the one hand to the ring $\D(\wh{R}, k)$ and on the other hand to the $\wh{R}$-module $\wh{R} \otimes_R \D$ endowed with a $\D(\wh{R}, k)$-module structure. Part (b) of the following proposition removes this ambiguity.

\begin{prop}\label{completion omnibus}
\begin{enumerate}[(a)]
\item The functor $M \mapsto \cM$ from $\D$-modules to $\cD$-modules is exact.
\item The $\cD$-module $\cR \otimes_R \D$ is free of rank one.
\item Let $M$ be a $\D$-module, and let $F_{\bullet} \rightarrow M$ be a free resolution of $M$ by $\D$-modules. Then
\[
\cR \otimes_R F_{\bullet} \cong \cD \otimes_{\D} F_{\bullet}
\] 
is a free resolution of $\cM$ by $\cD$-modules (of the same ranks).
\end{enumerate}
\end{prop}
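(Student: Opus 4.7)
The plan is to handle the three parts essentially in order, leveraging the identification $\cM \cong \cD \otimes_\D M$ from Proposition \ref{tensor structure} at several points.

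For part (a), the key observation is that a sequence of $\cD$-modules is exact if and only if it is exact as a sequence of underlying $k$-vector spaces (or equivalently, of $\cR$-modules, since the $\partial_i$ play no role in checking kernels and images). Thus the exactness of $M \mapsto \cM$ reduces to the exactness of $M \mapsto \cR \otimes_R M$ as a functor of $R$-modules, which follows from the standard fact that $\cR$ is $\mm$-adically complete of the Noetherian ring $R$ and hence flat over $R$.

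For part (b), I would apply Proposition \ref{tensor structure} to the $\D$-module $M = \D$ itself. This gives an isomorphism of left $\cD$-modules
\[
\cR \otimes_R \D \cong \cD \otimes_\D \D \cong \cD,
\]
where the second isomorphism is the canonical one for the free rank-one module. Hence $\cR \otimes_R \D$ is free of rank one over $\cD$, with generator $1 \otimes 1$. This removes the notational ambiguity mentioned in the paragraph preceding the proposition.

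For part (c), I combine the first two parts. Applying part (a) to the exact complex $F_\bullet \to M \to 0$ shows that $\cR \otimes_R F_\bullet \to \cM \to 0$ is exact, so it is a resolution of $\cM$. Since each $F_i$ is a free $\D$-module of some rank $r_i$, i.e.\ $F_i \cong \D^{r_i}$, commuting the tensor product with the direct sum and invoking part (b) gives
\[
\cR \otimes_R F_i \cong \cR \otimes_R \D^{r_i} \cong (\cR \otimes_R \D)^{r_i} \cong \cD^{r_i},
\]
so each term is a free $\cD$-module of rank $r_i$, matching the rank of $F_i$. The asserted isomorphism $\cR \otimes_R F_\bullet \cong \cD \otimes_\D F_\bullet$ is then simply Proposition \ref{tensor structure} applied in each degree, and the naturality verified in the proof of that proposition guarantees compatibility with the differentials of the complex.

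No step here looks genuinely difficult; the only subtle point is (a), where one must be careful to articulate why exactness in $\cD$-modules is detected at the level of underlying vector spaces, and to invoke the flatness of $\cR$ over $R$ at the right moment. Once (a) is in hand, parts (b) and (c) are essentially formal manipulations with the isomorphism supplied by Proposition \ref{tensor structure}.
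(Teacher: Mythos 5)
Your proposal is correct and follows essentially the same route as the paper: flatness of $\cR$ over $R$ for part (a), Proposition \ref{tensor structure} applied to $M = \D$ for part (b), and the combination of (a), (b), and compatibility of tensor products with direct sums for part (c). The extra detail you supply (exactness of $\cD$-module sequences being detected on underlying $\cR$-modules, and the explicit rank computation) is a fleshing-out of the paper's terser argument, not a different method.
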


\begin{proof}
Part (a) follows from the fact that $\cR$ is a flat $R$-module. By Proposition \ref{tensor structure}, we have $\cR \otimes_R \D \cong \cD \otimes_{\D} \D = \cD$ as $\cD$-modules, proving part (b). Finally, part (c) follows immediately from parts (a) and (b) and the commutativity of tensor product with direct sum.
\end{proof}

Less obviously, the completion functor preserves the property of holonomicity. (Hartshorne and Polini observed this already in the case of local cohomology modules of $R$.) To prove this, it is convenient to use the definition of the completion functor given in Proposition \ref{Hartshorne structure}.

\begin{prop}\label{completion is holonomic}
If $M$ is a holonomic $\D$-module, then $\cM$ is a holonomic $\cD$-module.
\end{prop}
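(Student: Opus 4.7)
The plan is to use the characterization of holonomicity via vanishing of Ext modules, which applies to both $\D$ and $\cD$ since each is Auslander regular of global dimension $n$. Specifically, a finitely generated left $\D$-module $M$ is holonomic if and only if $\Ext^i_\D(M,\D)=0$ for every $i<n$ (and automatically for $i>n$, by the global dimension bound), and likewise for $\cD$. So it suffices to show that $\cM$ is finitely generated over $\cD$ and that $\Ext^i_\cD(\cM,\cD)=0$ for $i<n$.

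Finite generation of $\cM=\cD\otimes_\D M$ over $\cD$ is immediate from finite generation of $M$ over $\D$ (via any surjection $\D^r\twoheadrightarrow M$ and Proposition \ref{completion omnibus}(a)). For the Ext computation, I would choose a resolution $F_\bullet\to M$ by finitely generated free left $\D$-modules, which can be taken of length at most $n$ by Auslander regularity. Proposition \ref{completion omnibus}(c) then gives a free resolution $\cD\otimes_\D F_\bullet \to\cM$ by finitely generated free $\cD$-modules of the same ranks, which can be used to compute $\Ext^i_\cD(\cM,\cD)$.

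The key calculation is the identification of the Hom complex. By tensor-Hom adjunction,
\[
\Hom_\cD(\cD\otimes_\D F_j,\cD)\cong \Hom_\D(F_j,\cD),
\]
and for a finitely generated free $F_j=\D^{r_j}$ we have
\[
\Hom_\D(\D^{r_j},\cD)\cong \cD^{r_j}\cong (\cR\otimes_R\D)^{r_j}\cong \cR\otimes_R\Hom_\D(F_j,\D),
\]
where the middle isomorphism is the one from the proof of Proposition \ref{tensor structure}. Naturality of these isomorphisms assembles into a chain isomorphism $\Hom_\cD(\cD\otimes_\D F_\bullet,\cD)\cong \cR\otimes_R\Hom_\D(F_\bullet,\D)$. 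Since $\cR$ is flat over $R$, taking cohomology commutes with $\cR\otimes_R-$, yielding
\[
\Ext^i_\cD(\cM,\cD)\cong \cR\otimes_R\Ext^i_\D(M,\D)
\]
for every $i$. Holonomicity of $M$ forces the right-hand side to vanish for $i<n$, and therefore $\cM$ is holonomic.

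The main points requiring care are bookkeeping of bimodule structures (one only needs the Hom-complex isomorphism as a complex of abelian groups to deduce the Ext vanishing, which sidesteps any subtlety about how the right $\cD$-module structure is transported) and the invocation of the Auslander-regular/Ext-grade characterization of holonomicity, which may merit a brief citation to \cite{Bjork} for both $\D$ and $\cD$. I do not anticipate any substantive obstacle beyond these bookkeeping items.
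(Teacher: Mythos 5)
Your proposal is correct in substance, but it takes a genuinely different route from the paper's. The paper deduces the proposition from the stronger statement $d_{\D}(M)=d_{\cD}(\cM)$ (Proposition \ref{dimension equality}), proved by choosing a good filtration on $M$, showing that its completion is a good filtration on $\cM$ with $\gr \cM \cong \gr\cD \otimes_{\gr\D}\gr M$, and comparing heights of the characteristic ideals under the extension $R[\xi_1,\ldots,\xi_n]\to\cR[\xi_1,\ldots,\xi_n]$. You instead use the homological characterization of holonomicity (a nonzero finitely generated $M$ is holonomic iff $\Ext^i_{\D}(M,\D)=0$ for $i\neq n$, and likewise over $\cD$; both appear in \cite{Bjork}) together with a flat base change of the dual complex. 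Your route avoids filtrations entirely, at the cost of invoking the Auslander-regularity/grade machinery for both rings; it yields the inequality $d_{\cD}(\cM)\leq d_{\D}(M)$ rather than the paper's equality, but that is all the holonomicity statement needs.

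One bookkeeping item in your Hom-complex identification does need to be repaired, and it is exactly the bimodule issue you flagged. The isomorphism $\cR\otimes_R\D\cong\cD$, $\wh{r}\otimes\delta\mapsto\wh{r}\delta$, from Proposition \ref{tensor structure} is an isomorphism of right $\D$-modules, and it does \emph{not} intertwine left multiplication by $\D$ with $\mathrm{id}_{\cR}\otimes(\text{left multiplication})$: one has $\partial_i\cdot(\wh{r}\delta)=\partial_i(\wh{r})\delta+\wh{r}\partial_i\delta$. Since the differentials of $\Hom_{\D}(F_{\bullet},\D)\cong\D^{r_{\bullet}}$ are left multiplication by matrices over $\D$ (these are right $\D$-linear, hence right $R$-linear, maps), the correct identification is
\[
\Hom_{\cD}(\cD\otimes_{\D}F_{\bullet},\cD)\;\cong\;\Hom_{\D}(F_{\bullet},\D)\otimes_{\D}\cD\;\cong\;\Hom_{\D}(F_{\bullet},\D)\otimes_R\cR,
\]
where the last step uses the mirror isomorphism $\D\otimes_R\cR\cong\cD$, $\delta\otimes\wh{r}\mapsto\delta\wh{r}$ (an isomorphism of $(\D,\cR)$-bimodules, which exists because $\cD$ is free as a right $\cR$-module on the monomials in the $\partial_i$). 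In particular $\cD$ is flat as a \emph{left} $\D$-module (the mirror of Proposition \ref{completion omnibus}(a)), cohomology commutes with $-\otimes_R\cR$ by flatness of $\cR$ over $R$, and $\Ext^i_{\cD}(\cM,\cD)\cong\Ext^i_{\D}(M,\D)\otimes_R\cR$ as desired. With the tensor factor moved to the correct side, your argument goes through.
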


Proposition \ref{completion is holonomic} is a corollary of the following stronger statement:

\begin{prop}\label{dimension equality}
Let $M$ be a finitely generated $\D$-module. We have $d_{\D}(M) = d_{\cD}(\cM)$, where $d$ denotes $\D$- (resp. $\cD$-) module dimension.
\end{prop}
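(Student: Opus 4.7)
The plan is to transfer a good order filtration from $M$ to $\cM$ via flat base change, identify the associated graded modules, and reduce the equality of dimensions to a statement about a graded annihilator ideal in a polynomial ring.

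First, I would choose a good filtration $\{F_i M\}$ on $M$ with respect to the order filtration $\{F_j \D\}$ on $\D$, so that $\gr M$ is a finitely generated graded module over $\gr \D = R[\xi_1, \ldots, \xi_n]$ with $d_\D(M) = \dim(\gr M)$. Setting $F_i \cM := \cR \otimes_R F_i M \subseteq \cM$, one checks that this is a good filtration for the order filtration on $\cD$. The key input is the formula $\partial_j \cdot (\wh{r} \otimes m) = \partial_j(\wh{r}) \otimes m + \wh{r} \otimes \partial_j \cdot m$ from Proposition \ref{Hartshorne structure}: it shows $\partial_j$ raises the filtration index by at most one, and, rewritten as $\wh{r} \otimes \partial_j m = \partial_j \cdot (\wh{r} \otimes m) - \partial_j(\wh{r}) \otimes m$, transfers the stability $F_j \D \cdot F_i M = F_{i+j} M$ for $i \gg 0$ to its $\cD$-module analog.

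Next, by flatness of $\cR$ over $R$, taking associated graded commutes with $\cR \otimes_R -$, yielding
\[
\gr \cM \cong \cR \otimes_R \gr M
\]
as finitely generated modules over $\gr \cD = \cR[\xi_1, \ldots, \xi_n] \cong \cR \otimes_R \gr \D$. Writing $J := \Ann_{R[\xi]}(\gr M)$, flatness also gives $\Ann_{\cR[\xi]}(\cR \otimes_R \gr M) = J\cR[\xi]$, and the proposition reduces to the base-change equality
\[
\dim R[\xi]/J = \dim \cR[\xi]/J\cR[\xi].
\]

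The main obstacle is this final dimension equality. The ideal $J$ is $\xi$-graded (since $\gr M$ is a $\xi$-graded $R[\xi]$-module), so its minimal primes are $\xi$-graded and correspond to $\xi$-conical irreducible components of the characteristic variety $V(J) \subseteq \Spec R[\xi]$. Since $R \to \cR$ is flat but not faithfully flat, for the base change to preserve Krull dimension one needs every minimal prime $P$ of $J$ to have $V(P)$ meeting the subscheme cut out by $\mm R[\xi]$, so that $\cR \otimes_R R[\xi]/P$ is a completion at a maximal ideal of $R[\xi]/P$ rather than a localization that kills the component. I expect this to be supplied by additional structural input on the characteristic variety coming from the conical (and, in the graded-module applications that ultimately motivate the proposition, bi-conical) nature of $V(J)$, which forces every irreducible component to pass through the origin of $T^*\mathbb{A}^n$; completion then preserves the dimension of each component, and summing over minimal primes yields the required equality.
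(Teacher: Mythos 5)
Your proposal retraces the paper's own proof almost step for step: the paper likewise sets $G_p\cM := \cR \otimes_R G_pM$ for a good filtration $\{G_pM\}$ of $M$, uses the Leibniz formula of Proposition \ref{Hartshorne structure} to check that this is a filtration of $\cM$ as a $\cD$-module, identifies $\gr\cM \cong \gr\cD \otimes_{\gr\D} \gr M$ by flatness, and observes that $\Ann_{\gr\cD}\gr\cM = J\cdot\gr\cD$ where $J = \Ann_{\gr\D}\gr M$. The two arguments diverge only at the very last step: the paper closes by asserting that the inclusion $\gr\D \subseteq \gr\cD$ is \emph{faithfully} flat and concluding $\hgt_{\gr\D}J = \hgt_{\gr\cD}(J\cdot\gr\cD)$, whereas you stop and flag precisely this base change of dimension as an unresolved obstacle. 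Your skepticism is justified: $R[\xi_1,\dots,\xi_n] \to \cR[\xi_1,\dots,\xi_n]$ is flat but \emph{not} faithfully flat (the maximal ideal $(x_1-1, x_2, \dots, x_n, \xi_1, \dots, \xi_n)$ extends to the unit ideal, since $x_1 - 1$ is a unit in $\cR$), so the paper's final sentence is not justified as written.

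The patch you sketch, however, cannot close the gap in the stated generality, because the statement itself fails for general finitely generated $M$: for $M = \D/\D(x_1-1)$ one has $\gr M \cong R[\xi_1,\dots,\xi_n]/(x_1-1)$, so $d_{\D}(M) = 2n-1$, while $\cM = \cD/\cD(x_1-1) = 0$ because $x_1-1$ is a unit in $\cR$. The $\xi$-conicality of $V(J)$ that you invoke only forces each component to meet the zero section $\{\xi = 0\}$; it imposes nothing on the $x$-coordinates, and it is exactly homogeneity in the $x$-direction (i.e., $M$ graded, so that $J$ is bihomogeneous and every component of $V(J)$ contains the point $(\mm, \xi_1, \dots, \xi_n)$) that would make completion dimension-preserving. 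So your instinct about where an extra hypothesis must enter is correct, but as a proof of the proposition as stated the argument has a genuine gap --- one it shares with, and indeed inherits from, the paper's own proof. Note that the downstream consequence, Proposition \ref{completion is holonomic}, survives with a weaker argument: flatness and going-down give $\hgt_{\gr\cD}(J\cdot\gr\cD) \geq \hgt_{\gr\D}J$, hence $d_{\cD}(\cM) \leq 2n - \hgt_{\gr\D}J = d_{\D}(M) = n$ when $M$ is holonomic, and Bernstein's inequality then forces $\cM$ to be holonomic.
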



\begin{proof}
We let $\{F_l \D\}_{l \geq 0}$ (resp. $\{F_l \cD\}_{l \geq 0}$) denote the order filtration on $\D$ (resp. $\cD$), and $\gr \D$ (resp. $\gr \cD$) the \emph{commutative} associated graded ring with respect to this filtration. We have $\gr \D \cong R[\xi_1, \ldots, \xi_n]$ (resp. $\gr \cD \cong \cR[\xi_1, \ldots, \xi_n]$) where $\xi_i$ is the image of $\partial_i$ in $F_1\D/F_0\D$ (resp. $F_1 \cD/F_0 \cD$).

Choose a good filtration $\{G_p M\}_{p \geq 0}$ of $M$, and let $\gr M$ be the associated (finitely generated) graded $\gr \D$-module. For each $p$, write $G_p \cM$ for $\wh{G_p M} = \cR \otimes_R G_pM$, which we identify with an $\cR$-submodule of $\cM$. Clearly $\cup_{p \geq 0} G_p \cM = \cM$. Moreover, for all $i$ and $p$ and all $m \in G_p M$, we have
\[
\partial_i \cdot (\wh{r} \otimes m) = \partial_i(\wh{r}) \otimes m + \wh{r} \otimes \partial_i \cdot m \in G_p \cM + G_{p+1} \cM = G_{p+1} \cM,
\]
using the definition of Proposition \ref{Hartshorne structure}, so that the family $\{G_p \cM\}_{p \geq 0}$ makes $\cM$ into a filtered $\cD$-module. By the flatness of $\cR$ over $R$, we see that $\gr \cM \cong \cR \otimes_R \gr M$ as $\cR$-modules. Under this identification, if $\overline{m}$ is the class of $m$ in $G_p M/G_{p-1}M \subseteq \gr M$, we see from the displayed equation that $\xi_i \cdot (\wh{r} \otimes \overline{m}) = \wh{r} \otimes \xi_i \cdot \overline{m}$; that is, the $\xi_i$ act by multiplication on the second tensor factor. It follows from this that $\gr \cM \cong \gr \cD \otimes_{\gr \D} \gr M$ as $\gr \cD$-modules. Since $\gr M$ is finitely generated over $\gr \D$, $\gr \cM$ is finitely generated over $\gr \cD$, so this filtration is good. Finally, let $J = \Ann_{\gr \D} \gr M$. Since $\gr M$ is a finitely generated $\gr \D$-module and the inclusion $\gr \D \subseteq \gr \cD$ is faithfully flat, the annihilator of $\gr \cM \cong \gr \cD \otimes_{\gr \D} \gr M$ in $\gr \cD$ is the ideal $J \cdot \gr \cD$. By the faithful flatness, we have $\hgt_{\gr \D} J = \hgt_{\gr \cD} J \cdot \gr \cD$, and these respective heights are by definition the desired dimensions, completing the proof.
\end{proof}

\section{De Rham cohomology and completion}\label{derham}

Let $M$ be a $\D$-module. If we regard the $\D$-module $\cM = \cR \otimes_R M$ as a $\D$-module by restriction of scalars, then the $R$-linear map $\kappa: M \rightarrow \cR \otimes_R M$ defined by $\kappa(m) = 1 \otimes m$ is in fact $\D$-linear: we have 
\[
\partial_i \cdot \kappa(m) = \partial_i(1) \otimes m + 1 \otimes \partial_i \cdot m = 1 \otimes \partial_i \cdot m = \kappa(\partial_i \cdot m)
\]
for $1 \leq i \leq n$. The map $\kappa$ induces a morphism of complexes of $k$-spaces
\[
\Omega^{\bullet}(M) \rightarrow \Omega^{\bullet}(\cM)
\]
by simply applying $\kappa$ to each summand of each object of the complex $\Omega^{\bullet}(M)$, and therefore induces maps
\[
\kappa^i: H^i_{\dR}(M) \rightarrow H^i_{\dR}(\cM)
\]
of $k$-spaces for all $i \geq 0$. The goal of this section is to prove the following result, our main theorem:

\begin{thm}\label{dR coh of completion of graded}
Let $M$ be a finitely generated graded $\D$-module. 
\begin{enumerate}[(a)]
\item The natural map
\[
\kappa^i: H^i_{\dR}(M) \rightarrow H^i_{\dR}(\cM)
\]
is injective for all $i\geq 0$.
\item For each integer $i \geq 0$ such that $\dim_kH^i_{\dR}(M)<\infty$, the natural map 
\[
\kappa^i: H^i_{\dR}(M) \rightarrow H^i_{\dR}(\cM)
\] 
is an isomorphism of $k$-spaces.
\end{enumerate}
\end{thm}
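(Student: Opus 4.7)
The plan is to reduce the computation of both $H^i_{\dR}(M)$ and $H^i_{\dR}(\cM)$ to cohomology of explicit complexes of graded $k$-vector spaces via a graded free resolution of $M$, so that the comparison map $\kappa^i$ is exhibited as the inclusion of a direct sum into the corresponding direct product. Since $\D$ has global dimension $n$ and $M$ is finitely generated graded, there is a finite graded free resolution $F_\bullet \to M$ with each $F_q = \bigoplus_j \D(-t_{qj})$ a finite direct sum of shifted copies of $\D$; by Proposition \ref{completion omnibus}(c), applying $\cR \otimes_R (-)$ yields a free resolution $\wh{F}_\bullet \to \cM$ with $\wh{F}_q = \bigoplus_j \cD(-t_{qj})$.

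The key preliminary computation is a lemma asserting that $H^p_{\dR}(\D) = 0$ for all $p < n$ while $H^n_{\dR}(\D) \cong \omega_R$ as graded $k$-spaces (with the analogous statement for $\cD$). This holds because $\Omega^{\bullet}(\D)$, a complex of right $\D$-modules with its left-multiplication differentials, is identified---after the usual reindexing $p \mapsto n-p$ and Hodge-style pairing of $dx_J$'s with complementary Koszul basis elements---with the Koszul resolution of $\omega_R$ as a right $\D$-module attached to the regular sequence $\partial_1, \ldots, \partial_n$. The identical argument over $\cR$ shows that the induced map $H^n_{\dR}(\D) \to H^n_{\dR}(\cD)$ is simply the natural inclusion $R \hookrightarrow \cR$.

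Now consider the double complex $\Omega^{\bullet}(F_\bullet)$, its counterpart $\Omega^{\bullet}(\wh{F}_\bullet)$, and the map $\Omega^\bullet(\kappa)$ between them. Filtering by the resolution direction, the preceding lemma collapses the $E_1$ page of the associated spectral sequence to the single row $p = n$, where
\[
E_1^{n,q} = \bigoplus_j R(-s_{qj}) \quad \text{and} \quad \wh{E}_1^{n,q} = \bigoplus_j \cR(-s_{qj})
\]
for suitable shifts $s_{qj}$. Both spectral sequences degenerate at $E_2$, giving $H^i_{\dR}(M) \cong H^{n-i}(E_1^{n, \bullet})$ and $H^i_{\dR}(\cM) \cong H^{n-i}(\wh{E}_1^{n, \bullet})$, with $\kappa^i$ realized as the map induced by the inclusion of the first complex into the second. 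Because the resolution is \emph{graded}, each differential in $E_1^{n, \bullet}$ is a matrix of graded-homogeneous elements of $\D$ acting as finite-order differential operators, so $E_1^{n, \bullet}$ decomposes as the direct sum over $l \in \mathbb{Z}$ of its degree-$l$ subcomplexes $(E_1^{n, \bullet})_l$, each a bounded complex of finite-dimensional $k$-vector spaces.

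The decisive observation is that $\cR = \prod_{l \geq 0} R_l$ as a $k$-vector space, and because the differentials are graded-homogeneous and of finite order, this promotes to an isomorphism of complexes $\wh{E}_1^{n, \bullet} \cong \prod_l (E_1^{n, \bullet})_l$. Since direct sum and direct product of $k$-vector spaces are each exact, cohomology commutes with both, and we obtain
\[
H^{n-i}(E_1^{n, \bullet}) = \bigoplus_l H^{n-i}\bigl((E_1^{n, \bullet})_l\bigr), \qquad H^{n-i}(\wh{E}_1^{n, \bullet}) = \prod_l H^{n-i}\bigl((E_1^{n, \bullet})_l\bigr),
\]
with $\kappa^i$ realized as the canonical inclusion $\bigoplus \hookrightarrow \prod$. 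This inclusion is automatically injective, proving (a); it is an isomorphism precisely when only finitely many of the finite-dimensional summands $H^{n-i}((E_1^{n, \bullet})_l)$ are nonzero, equivalently when $H^i_{\dR}(M)$ is finite-dimensional over $k$, proving (b). The main technical obstacle will be the careful bookkeeping of the spectral sequence---in particular, tracking grading shifts and verifying that the map on the collapsed $E_\infty$ page really agrees with $\kappa^i$---together with the identification $\wh{E}_1^{n, \bullet} \cong \prod_l (E_1^{n, \bullet})_l$, which depends essentially on the graded hypothesis, since only then are the differentials compatible with the degree-wise product decomposition of $\cR$.
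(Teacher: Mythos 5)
Your proposal is correct and follows essentially the same route as the paper: both identify $H^i_{\dR}(M)$ with $\Tor^{\D}_{n-i}(R^{\tau},M)$ using that $\Omega^{\bullet}(\D)$ is a (Koszul-type) right $\D$-free resolution of $R^{\tau}$, check compatibility of this identification with completion via the double complex $\Omega^{\bullet}(\D)\otimes_{\D}F_{\bullet}$, and then compare the resulting complexes of graded free $R$- and $\cR$-modules with homogeneous differentials. The only difference is cosmetic: where you package the final comparison as the inclusion $\bigoplus_l H^{n-i}((E_1^{n,\bullet})_l)\hookrightarrow\prod_l H^{n-i}((E_1^{n,\bullet})_l)$ of degreewise cohomologies, the paper carries out the equivalent element-level argument, splitting cycles and bounding chains into homogeneous components.
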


\begin{remark}\label{fin dim necessary}
The hypothesis of finite-dimensionality in Theorem \ref{dR coh of completion of graded}(b) is necessary. For example, $\D$ itself is a finitely generated graded $\D$-module. We have $H^n_{\dR}(\D) \cong R$ and $H^n_{\dR}(\cD) \cong \cR$, and the natural map $R \rightarrow \cR$ is injective but not surjective.
\end{remark}


In order to prove Theorem \ref{dR coh of completion of graded}, we will identify the de Rham cohomology of $M$ with certain $\Tor$ groups. In Proposition \ref{completion of Tor}, we prove an analogue of Theorem \ref{dR coh of completion of graded} for these $\Tor$ groups. Finally, in Proposition \ref{prop: de Rham Tor compatible with completion}, we construct a commutative diagram enabling us to deduce Theorem \ref{dR coh of completion of graded} from Proposition \ref{completion of Tor}.

\begin{prop}\label{completion of Tor}
Let $M$ be a finitely generated graded $\D$-module. 
\begin{enumerate}[(a)]
\item There are natural maps
\[
\iota_j: \Tor^{\D}_j(R^{\tau}, M) \rightarrow \Tor^{\cD}_j(\cR^{\tau}, \cM)
\]
induced by $M\to \cM$, that are injective for all $j\geq 0$.
\item Furthermore, for each $j \geq 0$ such that $\dim_k\Tor^{\D}_j(R^{\tau}, M)<\infty$, the natural map
\[
\iota_j: \Tor^{\D}_j(R^{\tau}, M) \rightarrow \Tor^{\cD}_j(\cR^{\tau}, \cM)
\] 
is an isomorphism of $k$-spaces.
\end{enumerate}
\end{prop}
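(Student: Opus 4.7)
The plan is to compute both $\Tor$ modules via a single graded free resolution $F_\bullet \to M$, and then to recognize the two resulting complexes of $k$-spaces as, respectively, the direct sum and the direct product of the same family of bounded complexes of finite-dimensional $k$-spaces. Since $M$ is finitely generated and graded, I choose $F_\bullet$ with each $F_p = \bigoplus_{k \in K_p} \D(-d_{p,k})$ ($K_p$ finite) and with degree-zero graded differentials, represented by matrices of homogeneous elements of $\D$. By Proposition~\ref{completion omnibus}(c), $\widehat{F}_\bullet := \cR \otimes_R F_\bullet \cong \cD \otimes_\D F_\bullet$ is then a free $\cD$-resolution of $\cM$. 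Writing $C_\bullet := R^\tau \otimes_\D F_\bullet$ and $\widehat{C}_\bullet := \cR^\tau \otimes_\cD \widehat{F}_\bullet \cong \cR^\tau \otimes_\D F_\bullet$, we have $\Tor^\D_j(R^\tau, M) = h^j(C_\bullet)$ and $\Tor^\cD_j(\cR^\tau, \cM) = h^j(\widehat{C}_\bullet)$, and $\iota_j$ is induced by the inclusion $R^\tau \hookrightarrow \cR^\tau$ of right $\D$-modules.

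The next step is to exploit the grading. The graded pieces $(C_p)_d = \bigoplus_k R_{d-d_{p,k}}$ are finite-dimensional for every $d$, and because the differentials of $F_\bullet$ are represented by homogeneous matrices, the induced differentials on $C_\bullet$ preserve graded degree; hence $C_\bullet$ splits as $\bigoplus_d (C_\bullet)_d$, a direct sum of bounded complexes of finite-dimensional $k$-spaces. For the completion, the finiteness of each $K_p$ allows a finite direct sum to commute past an arbitrary direct product:
\[
\widehat{C}_p = \bigoplus_k \cR(-d_{p,k}) = \bigoplus_k \prod_d R_{d-d_{p,k}} = \prod_d \bigoplus_k R_{d-d_{p,k}} = \prod_d (C_p)_d.
\]
The same homogeneous matrices, now acting on formal power series, yield differentials that are simply the direct product, indexed by $d$, of the differentials of $(C_\bullet)_d$; thus $\widehat{C}_\bullet = \prod_d (C_\bullet)_d$ as complexes, and the chain map $C_\bullet \to \widehat{C}_\bullet$ induced by $R^\tau \hookrightarrow \cR^\tau$ is the canonical inclusion of a direct sum into a direct product.

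Finally, cohomology of complexes of $k$-spaces commutes with both direct sums and direct products (both are exact), so
\[
h^j(C_\bullet) = \bigoplus_d V_d \quad\text{and}\quad h^j(\widehat{C}_\bullet) = \prod_d V_d, \qquad V_d := h^j\bigl((C_\bullet)_d\bigr).
\]
The map $\iota_j$ is therefore the inclusion $\bigoplus_d V_d \hookrightarrow \prod_d V_d$, which is always injective, proving (a); and when $h^j(C_\bullet) = \bigoplus_d V_d$ is finite-dimensional, all but finitely many $V_d$ vanish, so this inclusion is an isomorphism, proving (b).

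The main obstacle is really just a piece of bookkeeping: one must verify that the differential on $\widehat{C}_\bullet$ genuinely is the direct product, indexed by $d$, of the differentials on the $(C_\bullet)_d$'s. This reduces to checking that the right action on $\cR^\tau$ of a homogeneous element of $\D$ is compatible termwise with its right action on the graded pieces of $R^\tau$, which is clear from the formula $\widehat{r} \cdot \partial_i = -\partial_i(\widehat{r})$ for formal power series (formal differentiation acts termwise on homogeneous components). Once this identification is in place, the proof collapses to the trivial observation that a direct sum embeds canonically into the corresponding direct product, with equality precisely when all but finitely many summands vanish.
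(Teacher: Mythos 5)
Your proof is correct and rests on the same mechanism as the paper's: choose a finite graded free resolution, observe that both Tor complexes are given by the same homogeneous matrices acting on $R^{\beta_\bullet}$ and $\cR^{\beta_\bullet}$ respectively, and compare degree by degree. Your packaging of this as the canonical inclusion of $\bigoplus_d (C_\bullet)_d$ into $\prod_d (C_\bullet)_d$, together with the exactness of sums and products of $k$-spaces, is a clean streamlining of the paper's explicit cycle-and-boundary manipulations (splitting a power-series witness into a polynomial part and a high-order part for injectivity, and using exactness in large degrees for surjectivity), but it is the same argument at heart.
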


\begin{proof}
Choose a graded free resolution $F_{\bullet}$ of $M$ as a $\D$-module. Since $M$ is finitely generated over $\D$, we may assume that each $F_j$ is \emph{finite} free, but possibly with shifts in the grading. That is, we have $F_j = \oplus_{l=1}^{\beta_j}\D(\gamma_{l,j})$ for some $\beta_j \geq 0$ and some integers $\gamma_{l,j}$. Then $\Tor^{\D}_j(R^{\tau}, M) = h_j(R^{\tau} \otimes_{\D} F_{\bullet})$ by definition. The maps $F_j \rightarrow F_{j-1}$ in the complex $F_{\bullet}$ are given by multiplication by $\beta_{(j-1)} \times \beta_j$ matrices $B_j$ whose entries are homogeneous elements of $\D$. The $k$-space $R^{\tau} \otimes_{\D} \D$ is simply $R$, under the identification $r \otimes 1 \mapsto r$, and if $\delta \in \D$ acts on $\D$ via left multiplication, then $\mathrm{id}_{R^{\tau}} \otimes_{\D} \D$ corresponds under this identification to $\tau(\delta): R \rightarrow R$. Passing to direct sums, we see that the complex $R^{\tau} \otimes_{\D} F_{\bullet}$ is isomorphic to
\[
R^{\beta_{\bullet}} = (\cdots \rightarrow \oplus_{l=1}^{\beta_2}R(\gamma_{l,2})) \xrightarrow{\tau(B_2)} \oplus_{l=1}^{\beta_1}R(\gamma_{l,1}) \xrightarrow{\tau(B_1)} \oplus_{l=1}^{\beta_0}R(\gamma_{l,0}) \rightarrow 0),
\]
where $\tau(B_j)$ denotes the matrix whose entries, still homogeneous elements of $\D$, are the transposes of the entries of $B_j$.

The completion $\widehat{F_{\bullet}}$ is a free resolution of $\cM$ as a $\cD$-module (with $\widehat{F_j} \cong \cD^{\beta_j}$) and $\Tor^{\cD}_j(\cR^{\tau}, \cM) = h_j(\cR^{\tau} \otimes_{\cD} \widehat{F_{\bullet}})$, again by definition. The matrices defining the differentials in the complex $\widehat{F_{\bullet}}$ are the same as those in $F_{\bullet}$ (that is, all entries are homogeneous elements in the subring $\D \subseteq \cD$), so that the complex $\cR^{\tau} \otimes_{\cD} \widehat{F_{\bullet}}$ is isomorphic to
\[
\cR^{\beta_{\bullet}} = (\cdots \rightarrow \cR^{\beta_2} \xrightarrow{\tau(B_2)} \cR^{\beta_1} \xrightarrow{\tau(B_1)} \cR^{\beta_0} \rightarrow 0),
\]
which contains $R^{\beta_{\bullet}}$ as a subcomplex. The natural map $\iota_{\bullet}: R^{\beta_{\bullet}} \rightarrow \cR^{\beta_{\bullet}}$ induces maps $\Tor^{\D}_j(R^{\tau}, M) \rightarrow \Tor^{\cD}_j(\cR^{\tau}, \cM)$ of $k$-spaces for all $j \geq 0$, which we again denote $\iota_j$.

First we show that $\iota_j$ is injective on homology for all $j \geq 0$. Let $z \in \oplus_{l=1}^{\beta_j}R(\gamma_{l,j})$ be a cycle. Assume that the image of $z$ under $\iota_j$ is a boundary, {\it i.e.}, that there is $y\in \cR^{\beta_{j+1}}$ such that $\tau(B_{j+1})(y)=z$. Write each component of $y$ as a formal sum of homogeneous components. Since every entry of $\tau(B_{j+1})$ is homogeneous and every component of $z$ is a polynomial, we can write $y=y_1+y_2$ where each component of $y_1$ is a polynomial and the order of each component of $\tau(B_{j+1})(y_2)$ is greater than the maximal degree of all components of $z$. It is clear now that $\tau(B_{j+1})(y_1)=z$ and $\tau(B_{j+1})(y_2)=0$. Since each component of $y_1$ is a polynomial, $z$ is a boundary in the complex $R^{\beta_{\bullet}}$. This implies the injectivity of $\iota_j$ on homology, proving part (a).

Now assume that $\dim_k\Tor^{\D}_j(R^{\tau}, M)<\infty$ for some $j \geq 0$. We show that $\iota_j$ is an isomorphism on homology. Suppose that $\widehat{z}$ is a cycle in $\cR^{\beta_j}$. Since the homology $h_j(R^{\beta_{\bullet}})$ is finite-dimensional, there is an integer $s_j$ such that, when restricted to graded pieces of degree greater than $s_j$, the complex $R^{\beta_{\bullet}}$ is exact at the $j$th spot. Write $\widehat{z}$ as a formal sum of homogeneous components. Each homogeneous component of sufficiently large degree, considered by itself, is a cycle in $R^{\beta_{\bullet}}$ (because the entries of the differential matrix $\tau(B_j)$ are homogeneous) and therefore is a boundary as well. The formal sum $\widehat{z'}$ of all such components is therefore a boundary in $\cR^{\beta_j}$, and all components of $\widehat{z} - \widehat{z'}$ are polynomials. That is, $\widehat{z} - \widehat{z'}$ belongs to $\oplus_{l=1}^{\beta_j}R(\gamma_{l,j})$. Thus $\widehat{z}$ differs by a boundary from a cycle in $\oplus_{l=1}^{\beta_j}R(\gamma_{l,j})$; {\it i.e.}, $\iota_j$ is surjective on homology. This completes the proof of part (b) and the proposition.
\end{proof}

\begin{prop}
\label{prop: de Rham Tor compatible with completion}
Let $M$ be a $\D$-module. For all integers $i \geq 0$, there are commutative diagrams
\[
\begin{tikzcd}
H^i_{\dR}(M) \arrow[d, "\kappa^{i}"] \arrow[r] & \Tor^{\D}_{n-i}(R^{\tau}, M) \arrow[d, "\iota_{n-i}"] \\
H^i_{\dR}(\cM) \arrow[r] & \Tor_{n-i}^{\cD}(\cR^{\tau}, \cM)
\end{tikzcd}
\]
where the vertical maps are induced by $M\to \cM$ and the horizontal ones are isomorphisms.
\end{prop}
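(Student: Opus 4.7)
The plan is to exhibit the horizontal isomorphisms via the Spencer (Koszul) resolution of $R^{\tau}$ as a right $\D$-module, and then to reconcile the resulting identification $H^i_{\dR}(M) \cong \Tor^{\D}_{n-i}(R^{\tau}, M)$ with the chain-level construction of $\iota_{n-i}$ (given in the proof of Proposition \ref{completion of Tor}) by means of a double complex.

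First I would set up the Spencer resolution: let $V = \bigoplus_{i=1}^{n} k\partial_i$, and let $K_{\bullet} \to R$ be the resolution by free left $\D$-modules given by $K_j = \D \otimes_k \bigwedge^j V$, with Koszul differential
\[
d(P \otimes \partial_{i_1} \wedge \cdots \wedge \partial_{i_j}) = \sum_{k=1}^{j} (-1)^{k+1} P\partial_{i_k} \otimes (\partial_{i_1} \wedge \cdots \widehat{\partial_{i_k}} \cdots \wedge \partial_{i_j})
\]
and augmentation $\D \to R$, $P \mapsto P(1)$. Transposing entrywise produces a free resolution $K_{\bullet}^{\tau} \to R^{\tau}$ by right $\D$-modules, so that $\Tor^{\D}_j(R^{\tau}, N) \cong h_j(K_{\bullet}^{\tau} \otimes_{\D} N)$ for every left $\D$-module $N$. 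Using the identification $\D^{\tau} \otimes_{\D} N \cong N$ via $P \otimes n \mapsto \tau(P)n$, together with the duality $\bigwedge^j V \cong \bigwedge^{n-j} V^{*}$ via contraction with the volume form $dx_1 \wedge \cdots \wedge dx_n$, the complex $K_{\bullet}^{\tau} \otimes_{\D} N$ becomes (up to the sign from $\tau(\partial_i) = -\partial_i$) the reindexed de Rham complex $\Omega^{n-\bullet}(N)$. Setting $N=M$ and $j = n-i$ yields the top horizontal isomorphism. The same argument over $\cD$ yields the bottom isomorphism, since by Proposition \ref{completion omnibus} the completion $\widehat{K}_{\bullet} = \cD \otimes_{\D} K_{\bullet}$ is exactly the Spencer resolution of $\cR$ as a left $\cD$-module.

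To handle commutativity, I would form the double complex $C_{p,q} = K_p^{\tau} \otimes_{\D} F_q$, where $F_{\bullet} \to M$ is the free resolution from the proof of Proposition \ref{completion of Tor}. Since each $K_p^{\tau}$ and each $F_q$ is $\D$-free, both spectral sequences of $C_{\bullet,\bullet}$ collapse at $E_2$ and exhibit $\Tor^{\D}_{\bullet}(R^{\tau}, M)$ in two ways: as $h_{\bullet}(R^{\tau} \otimes_{\D} F_{\bullet})$ (the identification used to define $\iota_{\bullet}$ in Proposition \ref{completion of Tor}) and as $h_{\bullet}(K_{\bullet}^{\tau} \otimes_{\D} M)$ (the identification with de Rham cohomology from the previous paragraph). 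Applying completion to the entire double complex (again using Proposition \ref{completion omnibus}) yields the analogous $\widehat{C}_{p,q} = \widehat{K}_p^{\tau} \otimes_{\cD} \widehat{F}_q$ over $\cD$, and the completion map $\kappa: M \to \cM$ induces a map of double complexes $C \to \widehat{C}$. The induced map $H_{\bullet}(\mathrm{Tot}\,C) \to H_{\bullet}(\mathrm{Tot}\,\widehat{C})$ on the abutment agrees, via the first spectral sequence, with $\iota_{n-i}$, and via the second with $\kappa^i$; this is precisely the required commutativity.

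I expect the main obstacle to be the sign and index bookkeeping in the second paragraph: the Koszul signs, the transposition sign $\tau(\partial_i) = -\partial_i$, and the index reversal $j \mapsto n-j$ from contracting with the volume form must all be carefully reconciled. Once the identification of $K_{\bullet}^{\tau} \otimes_{\D} N$ with the reindexed de Rham complex is established, compatibility with the completion functor is essentially formal, since the Spencer resolution is built entirely from differential operators living in $\D \subseteq \cD$ and the completion functor is exact.
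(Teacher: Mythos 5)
Your proposal is correct and follows essentially the same route as the paper: the transposed Spencer complex $K_{\bullet}^{\tau}$ is precisely the right $\D$-module resolution $\Omega^{\bullet}(\D)\rightarrow R^{\tau}$ that the paper (following Bj\"ork's Propositions 6.2.5.1--6.2.5.2) uses to identify $H^i_{\dR}(M)$ with $\Tor^{\D}_{n-i}(R^{\tau},M)$, and your double complex $K_p^{\tau}\otimes_{\D}F_q$ with its two collapsing spectral sequences is the same balancing-of-$\Tor$ argument the paper runs on the totalized complex $(\Omega^{\bullet}(\D)\otimes_{\D}F_{\bullet})_{\bullet}$ to get commutativity.
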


\begin{proof}
The horizontal maps are the same as in \cite[Propositions 6.2.5.1, 6.2.5.2]{Bjork} which also assert that they are isomorphisms. It remains to show the commutativity, {\it i.e.}, that the maps in \cite[Propositions 6.2.5.1, 6.2.5.2]{Bjork} are compatible with completion. To this end, we will analyze these maps more closely. Viewing $\D$ as a $\D$-module, we can consider its de Rham complex $\Omega^{\bullet}(\D)$, whose differentials are right $\D$-linear and which can be regarded as a right $\D$-module resolution of $R^{\tau}$ (see the proof of \cite[Proposition 6.2.5.1]{Bjork}). Tensoring with the (left) $\D$-module $M$, we obtain a complex $\Omega^{\bullet}(\D) \otimes_{\D} M$ of $k$-spaces whose $i$th cohomology space (if the complex is indexed cohomologically) is both $H^i_{\dR}(M)$ (since $\Omega^{\bullet}(\D) \otimes_{\D} M$ can be canonically identified with the de Rham complex $\Omega^{\bullet}(M)$ of $M$) and $\Tor_{n-i}^{\D}(R^{\tau}, M)$ (since we can calculate this $\Tor$ group using the free resolution $\Omega^{\bullet}(\D) \rightarrow R^{\tau}$ of the first variable). Repeating this reasoning over the formal power series ring, we see that the $i$th cohomology space of the complex $\Omega^{\bullet}(\cD) \otimes_{\cD} \cM$ is simultaneously $H^i_{\dR}(\cM)$ and $\Tor_{n-i}^{\cD}(\cR^{\tau}, \cM)$. 

Recall that the natural map of complexes 
\[
\Omega^{\bullet}(M) = \Omega^{\bullet}(\D) \otimes_{\D} M \rightarrow \Omega^{\bullet}(\cD) \otimes_{\cD} \cM = \Omega^{\bullet}(\cM)
\]
of $k$-spaces, which we have denoted by $\kappa^{\bullet}$, induces the maps $\kappa^i$ on cohomology. Choose a free resolution $F_{\bullet}$ of $M$ as a $\D$-module. As in Proposition \ref{completion of Tor}, we obtain a chain map
\[
\iota_{\bullet}: R^{\tau} \otimes_{\D} F_{\bullet} \rightarrow \cR^{\tau} \otimes_{\cD} \widehat{F_{\bullet}}
\]
inducing the maps $\iota_j$ on homology.
Now consider the totalized tensor product complexes $(\Omega^{\bullet}(\D) \otimes_{\D} F_{\bullet})_{\bullet}$ and $(\Omega^{\bullet}(\cD) \otimes_{\cD} \widehat{F_{\bullet}})_{\bullet}$ (observe that there is a natural map of complexes of $k$-spaces from the former totalized complex to the latter). We obtain a diagram
\[
\begin{tikzcd}
\Omega^{\bullet}(\D) \otimes_{\D} M \arrow[r] \arrow[d, "\kappa^{\bullet}"] & (\Omega^{\bullet}(\D) \otimes_{\D} F_{\bullet})_{\bullet} \arrow[d] & R^{\tau} \otimes_{\D} F_{\bullet} \arrow[l] \arrow[d, "\iota_{\bullet}"] \\
\Omega^{\bullet}(\cD) \otimes_{\cD} \cM \arrow[r] & (\Omega^{\bullet}(\cD) \otimes_{\cD} \widehat{F_{\bullet}})_{\bullet} & \cR^{\tau} \otimes_{\cD} \widehat{F_{\bullet}} \arrow[l]
\end{tikzcd}
\]
where all four horizontal arrows are quasi-isomorphisms (by the balancing of $\Tor$, \cite[Theorem 2.7.2]{WeibelHA}) and both squares are commutative. From this, by passing to (co)homology, it follows that we have commutative diagrams
\[
\begin{tikzcd}
H^i_{\dR}(M) \arrow[d, "\kappa^{i}"] \arrow[r] & \Tor^{\D}_{n-i}(R^{\tau}, M) \arrow[d, "\iota_{n-i}"] \\
H^i_{\dR}(\cM) \arrow[r] & \Tor_{n-i}^{\cD}(\cR^{\tau}, \cM)
\end{tikzcd}
\]
of $k$-spaces for all $i \geq 0$.
\end{proof}

\begin{proof}[Proof of Theorem \ref{dR coh of completion of graded}]
It follows from Proposition \ref{completion of Tor} that for all $i \geq 0$, $\iota_{n-i}$ is an isomorphism in the diagram in Proposition \ref{prop: de Rham Tor compatible with completion}. Since the horizontal arrows in that diagram are also isomorphisms, so is $\kappa^i$, completing the proof.
\end{proof}

\begin{remark}
As shown in \cite{SwitalaZhangGradedDual}, there are indeed non-holonomic graded $\D$-modules whose de Rham cohomology spaces are all finite dimensional ({\it e.g.} graded Matlis dual of a graded holonomic $\D$-module).
\end{remark}

\begin{remark}\label{completion of Ext}
If one assumes that $M$ is graded and {\it holonomic}, then there is a short proof that the two $k$-spaces in Theorem \ref{dR coh of completion of graded} have the same dimension, using recent results from \cite{LyubeznikExtD} and \cite{SwitalaZhangGradedDual}. To thi send, let $M$ be a graded holonomic $\D$-module. Then by Proposition \ref{completion is holonomic}, $\cM$ is also holonomic. Therefore, both $H^i_{\dR}(M)$ and $H^i_{\dR}(\cM)$ are finite-dimensional $k$-spaces, so it suffices to check that $H^i_{\dR}(M)$ and $H^i_{\dR}(\cM)$ have the same dimension by \cite[Propositions 6.2.5.1, 6.2.5.2]{Bjork}. By \cite[Theorem 5.3]{SwitalaZhangGradedDual}, $H^i_{\dR}(M)$ and $\Ext^{n-i}_{\D}(M, E)$ have the same dimension; by \cite[Theorem 1.3]{LyubeznikExtD}, $H^i_{\dR}(\cM)$ and $\Ext^{n-i}_{\cD}(\cM, E)$ have the same dimension. (Here $E$ is the injective hull of $k=R/\mm$ as an $R$-module; $E$ is also an $\cR$-module and $\cR \otimes_R E = E$, so both $E$s are the same, and $E$ is also the injective hull of $k=\cR/\cmm$ as an $\cR$-module.) Therefore we need only show that $\Ext^j_{\D}(M, E) \cong \Ext^j_{\cD}(\cM, E)$ as $k$-spaces for all $j$. This is easy to see directly, by taking a graded free $\D$-module resolution $F_{\bullet} \rightarrow M$ and using $F_{\bullet}$ (resp. $\widehat{F_{\bullet}}$) to compute the $\Ext$ groups: the complexes $\Hom^{\bullet}_{\D}(F_{\bullet}, E)$ and $\Hom^{\bullet}_{\cD}(\widehat{F_{\bullet}}, E)$ are the same, so their cohomology spaces coincide.
\end{remark}

\bibliographystyle{plain}

\bibliography{masterbib}

\end{document}